\renewcommand{\Im}{\mathrm{Im}\,}
\renewcommand{\Re}{\mathrm{Re}\,}
\newtheorem{theorem}{Theorem}[section]
\newtheorem{lemma}[theorem]{Lemma}
\theoremstyle{definition}
\newtheorem{definition}[theorem]{Definition}
\theoremstyle{remark}
\newtheorem{remark}[theorem]{Remark}
\numberwithin{equation}{section}
\begin{document}
\setcounter{page}{1}

\color{darkgray}{
\noindent \centering
{\small   }\hfill    {\small }\\
{\small }\hfill  {\small }}

\centerline{}

\centerline{}


\title[Boundedness of Fourier Integral Operators with complex phases]{Boundedness of Fourier Integral Operators with complex phases on Fourier Lebesgue spaces}

\author[D. Cardona]{Duv\'an Cardona$^{1,*}$}
\address{
  Duv\'an Cardona:
  \endgraf
  Department of Mathematics: Analysis, Logic and Discrete Mathematics
  \endgraf
  Ghent University, Belgium
  \endgraf
  {\it E-mail address} {\rm duvanc306@gmail.com, duvan.cardonasanchez@ugent.be}
  }

\author[W.O. Denteh]{William Obeng-Denteh$^{2,*}$}
\address{
  William Obeng-Denteh
  \endgraf
  Department of Mathematics
  \endgraf
  Kwame Nkrumah University of Science and Technology, (KNUST)- Ghana.
  \endgraf
  {\it E-mail address} {\rm wobengdenteh@gmail.com}
  }

\author[F. Opoku]{Frederick Opoku$^{3,*}$}
\address{
  Frederick Opoku
  \endgraf
  Department of Mathematics
  \endgraf
  Kwame Nkrumah University of Science and Technology, (KNUST)- Ghana.
  \endgraf
  {\it E-mail address} {\rm frederick@aims.edu.gh}
  }

\thanks{Duv\'an Cardona has been  supported  by the FWO  Odysseus  1  grant  G.0H94.18N:  Analysis  and  Partial Differential Equations and by the Methusalem programme of the Ghent University Special Research Fund (BOF)
(Grant number 01M01021), by the FWO Fellowship
Grant No 1204824N and by the FWO Grant K183725N of the Belgian Research Foundation FWO. Frederick Opoku has been supported by Kwame Nkrumah University of Science and Technology, (KNUST) of Ghana.
}

\begin{abstract} In this paper, we establish boundedness estimates for Fourier integral operators on Fourier Lebesgue spaces when the associated canonical relation is parametrised by a complex phase function. Our result constitutes the complex analogue of those obtained for real canonical relations by Rodino, Nicola, and Cordero. We prove that, under the spatial factorization condition of rank $\varkappa$, the corresponding Fourier integral operator is bounded on the Fourier Lebesgue space $\mathcal{F}L^p,$ provided that the order $m$ of the operator satisfies that
$ 
m \leq -\varkappa\left|\frac{1}{p}-\frac{1}{2}\right|, 1 \leq p \leq \infty.
$
This condition on the order $m$ is sharp.
\newline
\newline
\noindent \textit{Keywords.} Fourier integral operators, Fourier Lebesgue spaces,  Complex canonical relations.
\newline
\noindent \textit{2020 Mathematics Subject Classification.} Primary 35S30; Secondary 42B37.
\end{abstract} \maketitle
\allowdisplaybreaks
\tableofcontents


\section{Introduction}

Fourier integral operators with complex-valued phase functions arise, for instance, in the analysis of the wave equation associated with Hörmander sub-Laplacians on Heisenberg-type and Métivier groups, as was recently observed by Martini and Müller \cite{MMNG2023}. Such operators appear in settings where the classical theory of Fourier integral operators with real-valued phase functions is not applicable, in particular when one seeks sharp boundedness results. Indeed, in the study of hyperbolic equations involving the time variable, one encounters topological obstructions (caustics) that cannot be avoided if one restricts oneself to real-valued phase functions. The fact that a single complex-valued phase function can parametrize a canonical relation leads to a global construction; see Laptev, Safarov, and Vassiliev \cite{Laptev:Safarov:Vassiliev}. As a consequence, the theory of Fourier integral operators with complex-valued phase functions does not admit a simple reduction to the real-valued case. Nevertheless, extending the calculus of Fourier integral operators developed by Hörmander \cite{Hormander1971Ac} and by Duistermaat and Hörmander \cite{Duistermaat-Hormander:FIOs-2}, the corresponding calculus for complex phase functions was introduced in the 1970s by Melin and Sjöstrand \cite{Melin-Sjostrand1976}, motivated by the construction of parametrices for operators of principal type with non-real principal symbols.

The aim of this paper is to establish boundedness properties for Fourier integral operators on Fourier Lebesgue spaces associated with complex canonical relations. Our analysis builds on fundamental progress in the subject, particularly in settings where the canonical relation is parametrised by a complex phase function. The methods and techniques developed here combine ideas from Ruzhansky \cite{Ruz2001}, Laptev, Safarov, and Vassiliev \cite{Laptev:Safarov:Vassiliev}, Rodino, Nicola, and Cordero \cite{CNR2009}, Nicola \cite{Nicola2010}, as well as the work of the first author with Ruzhansky \cite{CR2024}.

Fourier integral operators with real-valued phase functions, and associated with real canonical relations, were introduced by H\"ormander \cite{Hormander1971Ac} and Duistermaat and H\"ormander \cite{Duistermaat-Hormander:FIOs-2}. The time-dependent boundedness properties of these operators are well understood in view of the celebrated results of Seeger, Sogge, and Stein \cite{SSS} for $L^p$ spaces, and by the weak (1,1) boundedness theorem due to Tao \cite{Tao}. On the other hand, by taking averages in time,  Sogge \cite{Sogge1991} conjectured \emph{local smoothing estimates} for the wave equation, and consequently for Fourier integral operators. A complete solution to this conjecture in dimension $d=2$ was established by Guth, Wang, and Zhang \cite{GWZ2020}.

On the other hand, motivated by the problem of constructing parametrices or fundamental solutions for operators of principal type with non-real principal symbols, Melin and Sj\"ostrand \cite{Melin-Sjostrand1976} investigated Fourier integral operators associated with complex-valued phase functions. The $L^p$-regularity properties of these operators were further studied by Ruzhansky \cite{Ruz2001}, who showed that the results of Seeger, Sogge, and Stein \cite{SSS} can be extended to this setting. Moreover, in a series of papers, Ruzhansky investigated whether the factorization condition is necessary to obtain the boundedness of these operators; see \cite{Ruz2001} and the references therein.

The regularity properties of Fourier integral operators change significantly when one considers operators associated with degenerate or complex phase functions. This was observed by Rodino, Nicola, and Cordero \cite{CNR2009}, and for phases with a specific order of degeneracy by Nicola \cite{Nicola2010}. In the results of Cordero, Rodino, and Nicola, the operators are associated with real canonical relations. To provide a more complete picture, these results motivate the study of Fourier integral operators whose canonical relation is a complex Lagrangian manifold. In some sense, the use of complex phases provides a more natural approach to Fourier integral operators. Indeed, the complex phase approach allows one to avoid the geometric obstructions present in the global theory with real phase functions, and notably, every Fourier integral operator with a real phase can be globally parametrised by a single complex phase; see Laptev, Safarov, and Vassiliev \cite{Laptev:Safarov:Vassiliev} for this construction.

In order to present our main result we introduce the required notation in Section \ref{Preliminaries}. In particular,  the class of Fourier integral operators of order $m\in \mathbb{R},$ on $\mathbb{R}^d$  is denoted by $I^m(\mathbb{R}^d,\mathbb{R}^d;\mathcal{C}),$ and the Fourier Lebesgue spaces by  $\mathcal{F}L^p.$  Here, $\mathcal{C}$ is a complex-canonical relation locally parametrised by a complex-valued phase function $\Phi(x,\eta)$ which we assume to satisfy the {\it spatial smooth factorization condition} (SSFC) in Definition \ref{SFC}. This assumption
is inspired by, and closely follows, the spatial smooth factorization
condition introduced by Nicola \cite{Nicola2010} in the setting of Fourier integral operators of real-valued phases. It is the spatial counterpart of the one introduced by Ruzhansky in  \cite{Ruz2001} and by Seeger, Sogge and Stein \cite{SSS}. 
The following is our main result, inspired by  the case $\text{Im}(\Phi)\equiv 0$ that have been proved by Nicola \cite{Nicola2010}.
\begin{theorem}\label{Main:theorem}
Let $T\in I^m(\mathbb{R}^d,\mathbb{R}^d;\mathcal{C})$ be a Fourier integral operator associated to a complex canonical relation $\mathcal{C}$ (locally) parametrised by a complex phase function $\Phi$ of positive type.  
Let us assume that there exists a real parameter $\tau\in\mathbb{R},$ and an integer
$0\le \varkappa\le d,$ such that the non-degenerate real-valued phase function
\[
\Phi_\tau(x,\eta):=\Re\Phi(x,\eta)+\tau\,\Im\Phi(x,\eta),
\]
satisfies the spatial factorization condition (SSFC) of rank $\varkappa.$ Then for every $1\le p\le\infty,$ the operator $T:\mathcal{F}L^p(\mathbb{R}^d)\longrightarrow \mathcal{F}L^p(\mathbb{R}^d)$ extends to a bounded operator, that is, 
\begin{align*}
\exists C>0,\,\forall f\in  \mathcal{F}L^p,\,
\|Tf\|_{\mathcal{F}L^p}\lesssim \|f\|_{\mathcal{F}L^p},
\end{align*}provided that $ 
m \leq -\varkappa\left|\frac{1}{p}-\frac{1}{2}\right|.$ This condition on the order $m$ is sharp. 
\end{theorem}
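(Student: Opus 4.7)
The plan is to reduce the problem to the real-phase case treated by Nicola \cite{Nicola2010} and then close the argument by Riesz--Thorin interpolation. Starting from the decomposition $\Phi = \Re\Phi + \ii\,\Im\Phi$ and the definition of $\Phi_\tau$, one obtains the pointwise identity
\[
\e^{\ii\,\Phi(x,\eta)} \;=\; \e^{\ii\,\Phi_\tau(x,\eta)} \cdot \e^{-(1+\ii\tau)\,\Im\Phi(x,\eta)},
\]
which allows the Fourier integral operator to be rewritten as
\[
Tf(x) \;=\; \int_{\R^d} \e^{\ii\,\Phi_\tau(x,\eta)}\, \tilde{a}(x,\eta)\, \widehat{f}(\eta)\,\dd\eta,
\]
with the modified amplitude $\tilde{a}(x,\eta) := a(x,\eta)\, \e^{-(1+\ii\tau)\,\Im\Phi(x,\eta)}$. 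In this way $T$ is exhibited as an FIO of the same order $m$, but now associated with the \emph{real-valued}, non-degenerate phase $\Phi_\tau$.

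The next step is to show that $\tilde{a}$ still belongs to the same symbol class as $a$. Under the standard positivity condition $\Im\Phi\ge 0$ characterising complex phases in the sense of Melin--Sj\"ostrand, the factor $\e^{-(1+\ii\tau)\,\Im\Phi}$ is bounded in modulus by $1$. Combining this with the Fa\`a di Bruno formula and the classical homogeneity estimates on $\Im\Phi$ yields the uniform symbol bounds
\[
|\partial_x^\alpha \partial_\eta^\beta \tilde{a}(x,\eta)| \;\lesssim_{\alpha,\beta}\; (1+|\eta|)^{m-|\beta|},
\]
so that the SSFC of rank $\varkappa$ satisfied by $\Phi_\tau$ is transferred intact to the reduced real-phase FIO.

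Having effected the reduction, I would invoke Nicola's theorem \cite{Nicola2010} at the three endpoints $p\in\{1,2,\infty\}$. The case $p=2$ is essentially trivial: by Plancherel's theorem it is equivalent to $L^2$-boundedness of $T$, which holds for $m\le 0$. The hard endpoints are $p=1$ and $p=\infty$, where the sharp threshold $m\le -\varkappa/2$ requires a uniform $L^1$-bound in one variable on the Schwartz kernel
\[
K_T(\xi,\eta) \;=\; \int_{\R^d} \e^{\ii(\Phi_\tau(x,\eta)-x\cdot\xi)}\, \tilde{a}(x,\eta)\,\dd x,
\]
and this is achieved through a dyadic second-microlocalisation around the set where the $x$-Hessian of $\Phi_\tau$ degenerates, combined with a van der Corput-type decay estimate of order $\varkappa/2$. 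The full range $1\le p\le\infty$ then follows by Riesz--Thorin interpolation.

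The main obstacle I foresee is the preservation of the symbol class under multiplication by $\e^{-(1+\ii\tau)\,\Im\Phi}$, especially in regions where $\Im\Phi$ grows along the conic support of $a$; one must verify that exponential damping is compatible with the polynomial $\eta$-bounds and that the structural rank condition for $\Phi_\tau$ is not destroyed by the transfer. The sharpness of the order $m$ is then inherited from the real-phase case $\Im\Phi\equiv 0$, for which the counterexamples of Rodino, Nicola, and Cordero \cite{CNR2009} show that the threshold $m = -\varkappa|1/p-1/2|$ cannot be improved.
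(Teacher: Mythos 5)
Your reduction to the real phase $\Phi_\tau$ is the right first step, and the decomposition $e^{i\Phi}=e^{i\Phi_\tau}e^{-(1+i\tau)\Im\Phi}$ is exactly the identity the paper uses. However, your proposal has a genuine gap at the step where you claim that the modified amplitude $\tilde a(x,\eta)=a(x,\eta)e^{-(1+i\tau)\Im\Phi(x,\eta)}$ still satisfies the Kohn--Nirenberg bounds $|\partial_x^\alpha\partial_\eta^\beta\tilde a|\lesssim\langle\eta\rangle^{m-|\beta|}$, i.e.\ that $\tilde a\in S^m_{1,0}$. This is false, and it cannot be repaired: since $\Im\Phi\ge 0$ is homogeneous of degree one, a derivative in $x$ brings down a factor $\partial_x\Im\Phi$ which is $O(|\eta|)$, while the exponential itself is merely bounded by $1$ near the zero set of $\Im\Phi$. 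The only saving comes from the elementary inequality $|\nabla\Im\Phi|\lesssim\sqrt{\Im\Phi}$ (valid for smooth nonnegative functions), combined with the uniform bound $\sqrt{t}\,e^{-t}\lesssim 1$; this yields $|\partial_x^\alpha\partial_\eta^\beta e^{-(1+i\tau)\Im\Phi}|\lesssim\langle\eta\rangle^{(|\alpha|-|\beta|)/2}$, i.e.\ membership in $S^0_{1/2,1/2}$ and nothing better (this is precisely what Ruzhansky establishes, and what the paper cites from \cite[p.~28]{Ruz2001}). Consequently $\tilde a\in S^m_{1/2,1/2}$, not $S^m_{1,0}$, and Nicola's theorem \cite{Nicola2010}, which is stated and proved for $S^m_{1,0}$ amplitudes, cannot simply be invoked as a black box.

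This is not a technicality: the degradation from $(1,0)$ to $(1/2,1/2)$ is singled out in the paper as the central obstruction and the main new contribution. With an $S^m_{1/2,1/2}$ amplitude, each frequency derivative of the symbol loses only $\langle\eta\rangle^{-1/2}$ (equivalently gains $2^{j/2}$ on a dyadic shell), and each spatial derivative loses another $\langle\eta\rangle^{1/2}$, so the integration-by-parts argument underlying Nicola's kernel estimates has to be re-examined from scratch. The paper carries this out: after the conjugation by $\mathcal F$ it redoes the full Littlewood--Paley plus Seeger--Sogge--Stein angular cap decomposition, introduces the anisotropic integration-by-parts operator $L_j^v$, and verifies by a delicate bookkeeping of exponents (the competition between $2^{-jk}$ from $L_j^v$ and $2^{j|\alpha_2|/2}$ from the $(1/2,1/2)$ symbol bounds, which just cancel) that the kernel bound $\int|K_j^v(x,y)|\,dx\lesssim 2^{jm}$ survives. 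Only then do the $p=1$ and $p=\infty$ endpoints follow, and the full range is obtained by complex interpolation through the weighted scale $\mathcal FL^p_s$. Your outline skips precisely this verification, which is where the work of the theorem lies.
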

Next, we briefly discuss our main theorem.
\begin{remark}
    The main difficulty in proving Theorem~\ref{Main:theorem} arises from the fact that, after conjugating the operator $T$ with the Fourier transform, it can be realised as an operator with phase
\begin{equation}\label{Phi:tau:eta}
    \Phi_{\tau}(\eta,y)=\operatorname{Re}\Phi(\eta,y)+\tau\,\operatorname{Im}\Phi(\eta,y),
\end{equation}
and symbol $ 
\sigma_{\tau}(\eta,y)=\sigma(\eta,y)e^{-2\pi(1+i\tau)\operatorname{Im}\Phi(\eta,y)}\in S^m_{1/2,1/2}(\mathbb{R}^d_\eta\times \mathbb{R}^d_y).$ 
The main point is that the symbol $\sigma_{\tau}\in S^m_{1/2,1/2}$ now exhibits a more oscillatory behaviour, since it belongs to a H\"ormander class of type $(\tfrac12,\tfrac12)$, rather than to the Kohn--Nirenberg class $S^m_{1,0}$ according to the analysis in \cite{Nicola2010}. Even in the case of pseudo-differential operators, the behavior is markedly different. 
While pseudo-differential operators with Kohn--Nirenberg symbols fall within the Calder\'on--Zygmund theory, 
dealing with H\"ormander classes requires the use of Littlewood--Paley theory; see Fefferman~\cite{Fefferman1973}. Our motivation in building our analysis using the phase function \ref{Phi:tau:eta} comes from the approach in \cite{CR2024} where the weak (1,1) inequality for FIOs with complex phases has been established extending a previous result for real-valued phases due to T. Tao \cite{Tao}.

In the setting of real-valued phase functions, as in \cite{Nicola2010}, one must employ a first decomposition 
of the symbol into Littlewood--Paley components and, in addition, a second angular decomposition, which is typical 
in the analysis introduced by Seeger, Sogge, and Stein~\cite{SSS}. Our main contribution is to show that, even in the 
presence of the oscillatory symbol $\sigma_{\tau}\in S^{m}_{1/2,1/2}$, the estimates arising from the second 
decomposition yield sufficiently accurate kernel bounds to ensure the boundedness of the operator.
\end{remark}
\begin{remark}
   The sharpness of the order condition above as been discussed when  $\text{Im}(\Phi)\equiv 0$ in  Nicola \cite[Page 209]{Nicola2010}, see also Rodino, Nicola, and Cordero \cite{CNR2009}. Our proof of Theorem \ref{Main:theorem} will combine the approach and methods in Nicola \cite{Nicola2010} adapted to the setting of complex phases as developed by Ruzhansky \cite[Page 47]{Ruz2001}.  
\end{remark}
\begin{remark}
     As it was observed by Nicola \cite{Nicola2010}, the spatial factorization condition (SSFC) of rank $\varkappa$ in Definition \ref{SFC} implies that the Hessian
$d_x^2\Phi_\tau(x,\eta)$ has rank at most $\varkappa$. In particular, this condition is automatically satisfied when
$\varkappa=d$, or whenever the Hessian $d_x^2\Phi_\tau(x,\eta)$ has constant rank $\varkappa$.
This includes, as a limiting case, phases that are linear in the spatial
variable $x$, corresponding to $\varkappa=0$ and then to the case of pseudo-differential operators.
\end{remark}
\begin{remark}
    Observe, for instance, that in the case $\varkappa = d$, the order condition
\[
m \leq -d\left|\frac{1}{p}-\frac{1}{2}\right|,
\]
for boundedness on Fourier Lebesgue spaces differs dramatically from the order condition
\[
m \leq -(d-1)\left|\frac{1}{p}-\frac{1}{2}\right|,
\]
established by Seeger, Sogge, and Stein \cite{SSS} for the boundedness of Fourier integral operators with real-valued phase functions on $L^p$-spaces. In particular, the endpoint boundedness results at $p=1,\infty$ generally fail. 

Nevertheless, Ruzhansky \cite{Ruz2001} showed that the Seeger--Sogge--Stein order can also be preserved for Fourier integral operators with complex-valued phase functions.
\end{remark}
\begin{remark}
Our estimates assume the symbol to be compactly supported in the spatial variable and it would be interesting to investigate under wich conditions on the size of the phase our boundedness result remains global. We refer the reader for this topic to the references due to Staubach, Rodr\'iguez Lopez, Israelsson and Mattsson \cite{Staubach2021,Staubach2023,RodriguezLopez2014,RodriguezLopez2015}, see also Ruzhansky and Sugimoto \cite{Ruzhansky2006, Ruzhansky:Sugimoto}. 
\end{remark}

This paper is organised as follows.
In Section \ref{Preliminaries}  we collect the basic definitions, notation, and results on pseudodifferential and Fourier integral operators. The presentation is designed to provide a self-contained and
systematic framework for the analysis carried out in the subsequent sections,
with particular emphasis on symbol classes, phase functions, canonical
relations, and mapping properties of Fourier integral operators on
Fourier--Lebesgue spaces. In Section \ref{Proof:main:theorem:section} we present the derivation of kernel estimates, endpoint boundedness results, and
interpolation arguments for the proof of Theorem \ref{Main:theorem}.

\section{Preliminaries}\label{Preliminaries}

For non–negative quantities $A$ and $B$, the notation $A\lesssim B$ means that
there exists a constant $C>0$, independent of the relevant parameters, such that
$A\le CB$.  
We write $A\asymp B$ if both $A\lesssim B$ and $B\lesssim A$ hold and we denote by $N_0$ a fixed positive integer, independent of the dyadic indices. Now, for $R>0, \,x_0\in \mathbb{R}^d$, we denote by $B_R(x_0)$ the open ball in $\mathbb{R}^d$ with centre $x_0$ and radius $R$. We also set 
\[
\nabla_1 := (\partial_{\eta_1},\,\partial_{\eta_2},\,\ldots,\,\partial_{\eta_d}),\]  to denote the gradient operator with respect to the frequency variable $\eta$ of the phase space variable $(x,\eta).$  For an open set $U\subset\mathbb{R}^{d}$, we denote by $C^\infty(U)$ the space of
smooth functions on $U$ and by $C_c^\infty(U)$ the subspace of compactly supported
functions. Their duals are denoted respectively by $\mathcal{D}'(U)$ and
$\mathcal{E}'(U)$.  
The Schwartz space on $\mathbb{R}^d$ is denoted by $\mathcal{S}(\mathbb{R}^d)$ and
its dual by $\mathcal{S}'(\mathbb{R}^d)$. The Fourier transform of $f\in\mathcal{S}(\mathbb{R}^d)$ is defined by
\[
\widehat{f}(\eta)=\int_{\mathbb{R}^d} e^{-2\pi i x\cdot\eta} f(x)\,dx,
\]
and extended to $\mathcal{S}'(\mathbb{R}^d)$ by duality.
For $1\le p\le\infty,$ and $s\in\mathbb{R}$, the Fourier--Lebesgue space
$\mathcal{F}L^p_s(\mathbb{R}^d)$ consists of all tempered distributions
$f\in\mathcal{S}'(\mathbb{R}^d)$ such that
\[
\|f\|_{\mathcal{F}L^p_s}
=
\|\langle\eta\rangle^s \widehat{f}(\eta)\|_{L^p(\mathbb{R}^d)}<\infty,
\qquad
\langle\eta\rangle=(1+|\eta|^2)^{1/2},
\]
with the usual modification if $p=\infty$. The operator $\langle D\rangle^s$ defines an isomorphism from
$\mathcal{F}L^p_s$ onto $\mathcal{F}L^p$. Consequently, boundedness results on
$\mathcal{F}L^p_s$ can be reduced to boundedness on $\mathcal{F}L^p$ after a
change of order of the symbol. Let $X\subset\mathbb{R}^d$ be open and $m\in\mathbb{R}$.  
For $0\le\rho,\delta\le1$, the symbol class $S^m_{\rho,\delta}(X\times\mathbb{R}^d)$
is the space of all functions $a\in C^\infty(X\times\mathbb{R}^d)$ such that, for
every compact set $K\subset X$ and for all multi–indices $\alpha,\beta$, there
exists a constant $C_{\alpha,\beta,K}$ satisfying
\[
|\partial_x^\alpha\partial_\eta^\beta a(x,\eta)|
\le C_{\alpha,\beta,K}\,
\langle\eta\rangle^{m-\rho|\beta|+\delta|\alpha|},
\qquad (x,\eta)\in K\times\mathbb{R}^d.
\]
Of particular importance in this work are the classes $S^m_{1,0}$ and
$S^m_{1/2,1/2}$, which naturally arise in the study of Fourier integral operators
and their dyadic decompositions. Indeed, given a symbol $a\in S^m_{\rho,\delta}$, the corresponding pseudodifferential
operator is defined by
\[
a(x,D)f(x)
=
\int_{\mathbb{R}^d} e^{2\pi i x\cdot\eta} a(x,\eta)\widehat{f}(\eta)\,d\eta.
\]
A pseudodifferential operator is called \emph{regularizing} if its symbol belongs
to the Schwartz space $\mathcal{S}(\mathbb{R}^{2d})$. Such operators map
$\mathcal{S}'(\mathbb{R}^d)$ continuously into $\mathcal{S}(\mathbb{R}^d)$ and have
smooth integral kernels.

Let $X,Y\subset\mathbb{R}^d$ be open subsets. A real-valued function
$\Phi\in C^\infty(X\times Y\times\mathbb{R}^d\setminus\{0\})$ is called a
\emph{phase function} if it is positively homogeneous of degree one in $\eta$ and
satisfies
\[
\nabla_{(x,y,\eta)}\Phi(x,y,\eta)\neq0
\quad\text{for all }(x,y,\eta).
\]
The associated canonical relation $\mathcal{C}\subset T^*X\times T^*Y$ is defined by
\[
\mathcal{C}
=
\{(x,\nabla_x\Phi(x,y,\eta),\,y,-\nabla_y\Phi(x,y,\eta)) :
\nabla_\eta\Phi(x,y,\eta)=0\}.
\]As we will observe later, the rank properties of the mixed Hessian $\partial_x\partial_\eta\Phi$ play a
central role in the mapping properties of the associated Fourier integral
operator. On the other hand, in order to introduce Fourier integral operators with complex phases, we require  $\Phi$ to be a phase function of positive type; this means that:
    \begin{itemize}
        \item  $\operatorname{Im} \Phi(x,y,\eta)\ge 0$,
        \item $\Phi(x,y,\eta)$ has no critical points on its domain: $\partial_{\theta} \Phi(x,y,\eta)\neq 0.$
        \item $\Phi(x,y,\eta)$ is homogeneous of degree one in $\eta$ i.e, $\forall \lambda>0, \Phi(x,y,\lambda\eta)=\lambda\Phi(x,y,\eta).$
        \item $\{\partial_\eta\Phi(x,y,\eta)=0\}$ is smooth. This means that if $\partial_\eta\Phi(x,y,\eta)=0,$ then the  vectors $d\frac{\partial\Phi}{\partial\eta_j}$ are linearly independent over $\mathbb{C}.$
    \end{itemize}
    The associated canonical relation $\mathcal{C}\subset \widetilde{T^*X\times T^*Y}$ is defined by
\[
\mathcal{C}
=
\{(x,\nabla_x\Phi(x,y,\eta),\,y,-\nabla_y\Phi(x,y,\eta)) :
\nabla_\eta\Phi(x,y,\eta)=0\},
\] where $\widetilde{M}$ denotes the almost analytic continuation of a real smooth manifold $M,$ see \cite{Melin-Sjostrand1976}.
Let $\sigma:=\sigma(x,\eta)\in S^m_{\rho,\delta}$ be a
symbol compactly supported in the $x$ variable and let  $\Phi$ to be a phase function of positive type. The Fourier integral operator $T$ associated with $\Phi$ and $\sigma$ is
 defined by
\begin{equation}\label{FIO}
  Tf(x)
=
\int_{\mathbb{R}^d}\int_{\mathbb{R}^d}
e^{2\pi i \Phi(x,y,\eta)}\sigma(x,\eta)f(y)\,d\eta\,dy.  
\end{equation}
Under standard non–degeneracy assumptions on $\Phi$, the operator $T$ defines a
regular operator mapping $C_c^\infty(Y)$ into $\mathcal{D}'(X)$ and extends by density
to various function spaces. The class of Fourier integral operators with symbols in the class $\sigma\in S^m_{\rho,1-\rho},$ $1/2\leq \rho\leq 1,$ is invariantly defined under changes of coordinates and denoted by $I^m_{\rho,1-\rho}(X,Y;\mathcal{C}),$ indicating that their wave-front sets are contained in the canonical relation $\mathcal{C},$ locally parametrised by the phase function $\Phi.$ When $\rho=1,$ we write  $I^m(X,Y;\mathcal{C}):=I^m_{1,0}(X,Y;\mathcal{C}).$

In view of the H\"ormander phase-function theorem \cite{Hormander1971Ac} in the setting of complex-valued phase functions \cite{Melin-Sjostrand1976}, microlocally, each Fourier integral operator \eqref{FIO} can be re-written as 
\begin{align*}
Tf(x)=\int_{\mathbb{R}^d} e^{2\pi i \Phi(x,\eta)}\,\sigma(x,\eta)\,\widehat{f}(\eta)\,d\eta= \int_{\mathbb{R}^d}\int_{\mathbb{R}^d} e^{2\pi i \Phi(x,\eta)-2\pi i y\cdot \eta}\,\sigma(x,\eta)f(y)dy\,d\eta.
\end{align*} So, from now we will assume that each operator in the class  $I^m_{\rho,1-\rho}(X,Y;\mathcal{C}),$ $1/2\leq \rho\leq 1,$ has microlocally this form. 

In order to obtain sharp Fourier--Lebesgue boundedness results for Fourier integral operators with complex phase, we shall impose a structural assumption on the spatial dependence of the phase function. This assumption
is inspired by, and closely follows, the spatial smooth factorization
condition introduced by Nicola \cite{Nicola2010} in the setting of Fourier integral operators of real-valued phases.
Let us consider $\Phi(x,\eta)=\Re\Phi(x,\eta)+ i\,\Im\Phi(x,\eta)$ to be a
complex-valued phase function of positive type. For a fixed real parameter
$\tau\in\mathbb{R}$, we associate to $\Phi$ the real-valued phase
\[
\Phi_\tau(x,\eta):=\Re\Phi(x,\eta)+\tau\,\Im\Phi(x,\eta).
\]
The set $\Lambda_\tau\subset\mathbb{R}^d\times(\mathbb{R}^d\setminus\{0\})$
denotes the open domain on which $\Phi_\tau$ is smooth. Typically, we will assume that $\Lambda_\tau$ is the support of the symbol of the corresponding Fourier integral operator.
For $(x,\eta)\in\Lambda_\tau$, we write $\nabla_x\Phi_\tau(x,\eta)$ for the
gradient of $\Phi_\tau$ with respect to the spatial variable $x$, and
\[
d_x^2\Phi_\tau(x,\eta)
:=\bigl(\partial_{x_j}\partial_{x_k}\Phi_\tau(x,\eta)\bigr)_{1\le j,k\le d}
\]
for the corresponding spatial Hessian matrix.

\begin{definition}[Spatial smooth factorization condition (SSFC)]\label{SFC}
We assume that there exists an integer $0\le \varkappa\le d$ such that, for some 
$\tau \in \mathbb{R},$ and for every $(x_0,\eta_0)\in\Lambda_\tau$ with
$\eta_0\in\mathbb{S}^{d-1}$, there exists an open neighbourhood $\Omega$ of $x_0$
and an open neighbourhood $\Gamma'\subset\mathbb{S}^{d-1}$ of $\eta_0$, with
$\Omega\times\Gamma'\subset\Lambda_\tau$, satisfying the following properties.
\begin{itemize}
    \item {{(SSFC)}} For every $\eta\in\Gamma'$ there exists a smooth fibration of $\Omega$,
depending smoothly on $\eta$, whose fibres are affine subspaces of codimension
$\varkappa$, such that
\[
\nabla_x\Phi_\tau(\cdot,\eta)
\quad\text{is constant along each fibre}.
\]
\item In addition, we also  assume that $\Phi_\tau$ holds the non-degeneracy condition,
$$\operatorname{det}\partial_y\partial_{\eta}(\operatorname{Re}\Phi(y,\eta)+\tau\operatorname{Im}\Phi(y,\eta))\neq 0.$$ 
\end{itemize}
To simplify the terminology, if a phase function $\Phi$ can be associated with a phase function $\Phi_\tau$ that satisfies both properties in Definition \ref{SFC} for some $\tau \in \mathbb{R}$, we say that $\Phi$ satisfies the smooth spatial factorization condition of rank $\varkappa$.
\end{definition}
\begin{remark}
    As in the case of real-valued phase functions, this condition implies that the Hessian
$d_x^2\Phi_\tau(x,\eta)$ has rank at most $\varkappa$. In particular, the condition is automatically satisfied when
$\varkappa=d$, or whenever the Hessian $d_x^2\Phi_\tau(x,\eta)$ has constant rank $\varkappa$.
This includes, as a limiting case, phases that are linear in the spatial
variable $x$, corresponding to $\varkappa=0$.
\end{remark}
\begin{remark}
    Observe that in Definition \ref{SFC}, if $\Phi_\tau$ holds the non-degeneracy condition for some $\tau\in \mathbb{R},$ since $\gamma\mapsto\operatorname{det}\partial_y\partial_{\eta}(\operatorname{Re}\Phi(y,\eta)+\gamma\operatorname{Im}\Phi(y,\eta)),$ is a polynomial function, $\Phi_\tau$ holds the non-degeneracy condition  for almost all $\tau\in \mathbb{C}.$
\end{remark}
Under this assumption, the geometry of the canonical relation associated with
$\Phi_\tau$ exhibits a partial flatness in the spatial variables, quantified
by the integer $\varkappa$. As shown by Nicola \cite{Nicola2010} in the real-phase setting, this geometric
feature governs the precise loss of derivatives in Fourier--Lebesgue spaces.
In particular, it leads to the sharp threshold
\[
m \le -\, \varkappa\,\Bigl|\tfrac12-\tfrac1p\Bigr|,
\]
which will reappear in the complex-phase estimates proved below.

Throughout the paper we use standard dyadic partitions of unity in the frequency
variable. Let $\{\psi_j\}_{j\ge0}$ be a smooth partition of unity such that
$\operatorname{supp}\psi_j\subset\{\eta:2^{j-1}\le|\eta|\le2^{j+1}\}$ for $j\ge1$,
and $\operatorname{supp}\psi_0\subset\{|\eta|\le2\}$. This decomposition allows us to localize Fourier integral operators at frequency
scale $|\eta|\sim2^j$ and to exploit both symbol estimates and angular
decompositions adapted to the geometry of the canonical relation. This decomposition will be repeatedly used in the proof of boundedness results
on Fourier--Lebesgue spaces, especially in the treatment of endpoint cases and
interpolation arguments.

\section{FIOs $\&$ affine fibrations}\label{Proof:main:theorem:section}

The aim of this section is to prove Theorem \ref{Main:theorem}. 
The Fourier integral operator (FIO) $T$ to be considered is defined as
\begin{align}
Tf(x)= \int_{\mathbb{R}^d} e^{2\pi i\, \Phi(x,\eta)}\, \sigma(x,\eta)\, \widehat{f}(\eta)\, d\eta,
\end{align}
where $T:\mathcal{S}(\mathbb{R}^d)\to C^{\infty}_0(\mathbb{R}^{d})$, since the symbol $\sigma:=\sigma(x,\eta)\in S^m_{1,0}
(\mathbb{R}^d\times \mathbb{R}^d)$ has compact support in $x$.  Conjugating the operator $T$ with the Fourier transform we have that,
     \begin{align}
     \tilde{T}f(x)=\displaystyle{\iint} e^{2\pi i(\Phi(\eta,y)-x\eta)}\sigma(\eta,y)f(y)\,dyd\eta, \label{71}
     \end{align}
     where $\tilde{T}=\mathcal{F}\circ T \circ \mathcal{F}^{-1}$. The symbol $\sigma(\eta,y)$ is no longer supported with respect to $y$ and the phase $\Phi(\eta,y)-x\eta$  is a real-valued function which is no longer positively homogeneous of order $1$ with respect to $\eta$.

Let us assume that there exists a real parameter $\tau\in\mathbb{R},$ and an integer
$0\le \varkappa\le d,$ such that the real-valued phase
\[
\Phi_\tau(x,\eta):=\Re\Phi(x,\eta)+\tau\,\Im\Phi(x,\eta)
\]
satisfies the spatial factorization condition (SSFC) of rank $\varkappa$ in Definition \ref{SFC}.
    With this choice of $\tau$, we rewrite (\ref{71})  as follows:
     \begin{align}
     \tilde{T}f(x)=&\displaystyle{\iint} e^{2\pi i (\Phi(\eta,y)-x\eta)}\sigma(\eta,y)f(y)\,dyd\eta.\nonumber\\
     =&\displaystyle{\iint} e^{2\pi i(\operatorname{Re}\Phi(\eta,y)+i\operatorname{Im}\Phi(\eta,y))}\sigma(\eta,y)e^{-2\pi ix\eta}f(y)\,dyd\eta.\nonumber\\
     =&\displaystyle{\iint} e^{2\pi i(\operatorname{Re}\Phi(\eta,y)-\tau\operatorname{Im}\Phi(\eta,y)+\tau\operatorname{Im}\Phi(\eta,y)+i\operatorname{Im}\Phi(\eta,y))}\sigma(\eta,y)e^{-2\pi ix\eta}f(y)\,dyd\eta.\nonumber\\
     =&\displaystyle{\iint} e^{2\pi i(\operatorname{Re}\Phi(\eta,y)+\tau\operatorname{Im}\Phi(\eta,y))}\sigma(\eta,y)e^{-2\pi(1+i\tau)\operatorname{Im\Phi(\eta,y)}}e^{-2\pi ix\eta}f(y)\,dyd\eta.\nonumber\\
     =&\displaystyle{\iint} e^{2\pi i(\operatorname{Re}\Phi(\eta,y)+\tau\operatorname{Im}\Phi(\eta,y)-x\eta)}\sigma(\eta,y)e^{-2\pi(1+i\tau)\operatorname{Im\Phi(\eta,y)}}f(y)\,dyd\eta.\nonumber\\
     =&\displaystyle{\iint} e^{2\pi i(\Phi_{\tau}(\eta,y)-x\eta)}\sigma_{\tau}(\eta,y)f(y)\,dyd\eta,
     \label{73}
     \end{align}
     where  $\Phi_{\tau}(\eta,y)=\operatorname{Re}\Phi(\eta,y)+\tau\operatorname{Im}\Phi(\eta,y)$. Note that $$\sigma_{\tau}(\eta,y)=\sigma(\eta,y)e^{-2\pi(1+i\tau)\operatorname{Im\Phi(\eta,y)}}\in S^m_{1/2,1/2}(\mathbb{R}^d_\eta\times \mathbb{R}^d_y),$$ since  $e^{-2\pi(1+i\tau)\operatorname{Im\tilde{\Phi}(\eta,y)}}\in S^0_{1/2,1/2},$ see \cite[Page 28]{Ruz2001}.  Then it satisfies the following estimates; \\
     $\big|\partial_y^\alpha\partial_{\eta}^\beta e^{-2\pi(1+i\tau)\operatorname{Im\Phi(\eta,y)}}\big|\lesssim \langle y \rangle^{\dfrac{|\beta|-|\alpha|}{2}}$,  and 
     $\big|\partial_y^\alpha\partial_{\eta}^\beta \sigma_\tau(\eta,y) \big|\lesssim \langle y \rangle^{m-\frac{1}{2}|\alpha|+\frac{1}{2}|\beta|}$.\\
     The operator in (\ref{73}) has integral kernel
     \begin{equation}
         K(x,y)= \displaystyle{\int} e^{2\pi i(\Phi_{\tau}(\eta,y)-x\eta)}\sigma_{\tau}(\eta,y)d\eta.
     \end{equation}
    Now, we consider the Littlewood-Paley decomposition in $y$. We choose a smooth function $\psi_0(y)$ such that $\psi_0(y)=1$ for $|y|\le 1$ with $\psi_0(y)=0$ for $|y|\ge 2$. Set $\psi(y)=\psi_0(y)-\psi(2y)$, $\psi_j(y)=\psi(2^{-j}y), j\ge 1.$ Then,
    \begin{align*}
        \sum_{j=0}^{\infty}\psi_j(y)=1, \forall y \in \mathbb{R}^d\setminus(0).
    \end{align*}
 Notice that if $j\ge 1$, then $\psi_j$ is supported where $2^{j-1}\le |y| \le 2^{j+1} $.
    We can now write the kernel above as,
    \begin{align*}
        K(x,y)=\sum_{j\ge 1}K_j(x,y), 
    \end{align*}
    where,
       \begin{align} K_j(x,y)=\displaystyle{\int} e^{2\pi i(\Phi_{\tau }(y,\eta)-x\eta)}\sigma_{\tau j}(\eta,y)d\eta,\end{align}
    with $\sigma_{\tau j}(y,\eta)=\sigma_{\tau}(y,\eta)\psi_j(y).$ 

    In what follows we will use the argument of Nicola \cite{Nicola2010} adapted to complex phases as in Ruzhansky \cite[Page 47]{Ruz2001}. 
    Observe that $\eta$ lies in the open neighbourhood $\Omega' \Subset \Omega$. After shrinking $\Omega'$ and the parameter set $\Gamma'$ if necessary, the spatial smooth factorization condition (SSFC) (see Definition \ref{SFC}) implies that there exists an integer $\varkappa$ with $0 \le \varkappa\le d$, an open neighbourhood $U \times V$ of $(0,0)$ in $\mathbb{R}^\varkappa \times \mathbb{R}^{d-\varkappa}$, and a smooth change of variables
$U \times V \ni (u,v) \longmapsto \eta_y(u,v) \in \Omega_y, 
$ smoothly depending on the parameter $y \in \Gamma'$ and homogeneous of degree $0$ with respect to $y$, with $\Omega' \subset \Omega_y \subset \Omega$, such that the function $\eta \mapsto \nabla_\eta \Phi_\tau(\eta,y)$ is constant along each $(d-\varkappa)$--dimensional affine plane given by $u=\mathrm{const}$. 

Equivalently, in these adapted coordinates the gradient $\nabla_\eta \Phi_\tau(\eta_y(u,v),y)$ depends only on the transversal variable $u$ and is independent of the fibre variable $v$, so that
$\nabla_\eta \Phi_\tau(\eta_y(u,v),y)=\nabla_\eta \Phi_\tau(\eta_y(u,0),y),\, \,(u,v)\in U\times V$. We emphasize that, due to the previous conjugation of the Fourier integral operator, we are using Definition \ref{SFC}  with the variables $(x,\eta)$ replaced by $(\eta,y)$, so that no additional assumptions are introduced and the factorization structure is preserved.

For every $j\ge 1$ we chose $u^v_j, v=1,\cdots,N_\varkappa(j),$ such that $|u_j^v-u_j^{v^{\prime}}|\ge C_02^{-j/2}$ for $v\neq v^{\prime}$ and such that $U$ is covered by balls with centre $u_j^v$ and radius $C_12^{-j/2}$. It is easy to see that $N_\varkappa(j)=O(2^{j\varkappa/2})$. Let then $\eta_j^v=\eta_y(u_j^v,0)$. Consider a smooth partition of unity to cover $U$, given by smooth functions $\chi_j^v(u), v=1,\cdots,N_\varkappa(j),$ supported in the above balls and satisfying the estimate (see \cite{Nicola2010}),
\begin{align*}
    |\partial_u^{\alpha}\chi_j^v(u)|\lesssim 2^{j|\alpha|/2}.
\end{align*} Note that the support of $\chi_j(u)$ is contained on $U.$ So, in the support of $\chi_j,$ we have that $|u|\lesssim 1.$
    We further decompose the kernel $K_j(x,y)$ as $\sum_{v=1}^{N_\varkappa(j)}K_j^v(x,y)$ with 
    \begin{align}
        K_j^v(x,y)=  \displaystyle{\int}e^{2\pi i (\Phi_{\tau}(y,\eta)-x\eta)}\chi_j^v(u(y,\eta))\sigma_{\tau}^v(\eta,y)\psi_j(y)\,d\eta, \label{76}
    \end{align}
 where, $\sigma_{\tau}^v(y,\eta)\psi_j(y)$ is the symbol    restricted to a small cap  $v$  and localized to the dyadic band  $j$ in  $y$.
Consider now the first order Taylor expansion of $\Phi_{\tau}(\cdot,y)$ at $\eta^v_j$:
\begin{align*}
    \Phi_{\tau}(\eta,y)=\Phi_{\tau}(\eta_j^v,y)+\langle \nabla_1 \Phi_{\tau}(\eta_j^v,y) ,\eta-\eta_j^v\rangle+R_j^v(\eta,y),
\end{align*}
where, \begin{align*}
\nabla_1 := (\partial_{\eta_1},\,\partial_{\eta_2},\,\ldots,\,\partial_{\eta_d}),
\end{align*}
and
\begin{align*}
    R_j^v(\eta,y)=\dfrac{1}{2}\int\limits_0^1(1-t)(d_1^2\Phi_{\tau})(\eta^v_j+t(\eta-\eta_j^v),y)[\eta-\eta_j^v,\eta-\eta_j^v]\,dt.
\end{align*}
The phase function can be expressed as 
\begin{align*}
    \Phi_{\tau}(\eta,y)-\langle x,\eta \rangle= \langle \nabla_1\Phi_{\tau}(\eta_j^v,y)-x,\eta\rangle + r(\eta),
\end{align*}
where,
\begin{align*}
    r(\eta)= \Phi_{\tau}(\eta_j^v,y)-\langle \nabla_1\Phi_{\tau}(\eta_j^v,y),\eta_j^v\rangle+R_j^v(\eta,y).
\end{align*}
Fix $y \in \mathbb{R}^d$, and let us consider the first-order Taylor expansion of $\Phi_\tau(\cdot,y)$ at a point $\eta_j^v$:
\begin{align*}
    \Phi_\tau(\eta,y) 
    = \Phi_\tau(\eta_j^v, y) 
    + \langle \nabla_1 \Phi_\tau(\eta_j^v, y), \eta - \eta_j^v \rangle 
    + R_j^v(\eta,y),
\end{align*}
where $\nabla_1 \Phi_\tau(\eta_j^v,y) = \partial_\eta \Phi_\tau(\eta_j^v,y)$, and the remainder $R_j^v(\eta,y)$ is given by the integral form of the second-order Taylor remainder:
\begin{align*}
    R_j^v(\eta,y) = \frac12 \int_0^1 (1-t)\, d_1^2 \Phi_\tau(\eta_j^v + t(\eta-\eta_j^v), y)[\eta-\eta_j^v, \eta-\eta_j^v]\, dt.
\end{align*}
Here, $d_1^2 \Phi_\tau$ denotes the Hessian matrix of second derivatives with respect to the first argument of $\eta$. Thus, \[
d_1^2 \Phi_{\tau}(\eta,y)
= \left( \frac{\partial^2 \Phi_{\tau}}{\partial \eta_i\, \partial \eta_j}(\eta,y) \right)_{1 \le i,j \le d}.
\]
The kernel of the Fourier integral operator involves the phase $\Phi_\tau(\eta,y) - x\cdot \eta$. Substituting the Taylor expansion:
\begin{align*}
    \Phi_\tau(\eta,y) - x \cdot \eta 
    &= \Phi_\tau(\eta_j^v, y) + \langle \nabla_1 \Phi_\tau(\eta_j^v,y), \eta - \eta_j^v \rangle + R_j^v(\eta,y) - x \cdot \eta \\
    &= \langle \nabla_1 \Phi_\tau(\eta_j^v,y) - x, \eta \rangle 
    + \Phi_\tau(\eta_j^v,y) - \langle \nabla_1 \Phi_\tau(\eta_j^v,y), \eta_j^v \rangle + R_j^v(\eta,y).
\end{align*}
We denote
\begin{align*}
r(\eta) := \Phi_\tau(\eta_j^v,y) - \langle \nabla_1 \Phi_\tau(\eta_j^v,y), \eta_j^v \rangle + R_j^v(\eta,y),
\end{align*}
so that the phase $\Phi_\tau(\eta,y)$ can be written as
\begin{align*}
\Phi_\tau(\eta,y) - x\cdot \eta = \langle \nabla_1 \Phi_\tau(\eta_j^v,y) - x, \eta \rangle + r(\eta).
\end{align*}
This decomposition allows us to apply repeated integration by parts with respect to $\eta$, using operators adapted to the decomposition into $\eta'$, $\eta''$ coordinates, in order to control the kernel of the Fourier integral operator. The linear term provides decay, while the remainder $r(\eta)$ is uniformly bounded and does not affect the estimates.

After linearizing the phase function, we can now rewrite $(\ref{76})$ as,
    \begin{align}        K_j^v(x,y)=\int\limits_{ \text{supp}_\eta(\sigma(\eta,y))} e^{2\pi i \langle \nabla_1\Phi_{\tau}(\eta_j^v,y)-x,\eta\rangle}\sigma_{\tau}^v(\eta,y)\psi_j(y)\,d\eta, \label{77}
    \end{align}
where,
\begin{align*}
    \sigma_{\tau j}^v(\eta,y)=e^{2\pi i r(\eta)}\chi_j^v(u(y,\eta))\sigma_{ \tau j}(\eta,y).
\end{align*}
Note that in the support of $\chi_j^v(u(y,\eta)),$ we have that 
$$|\eta|\lesssim |(y,\eta)|\asymp  |u(y,\eta)|\lesssim 2^{-\frac{j}{2}}.$$
The following lemma will be used later in our argument of integration by parts.

\begin{lemma}
    Let $\Phi_\tau(\eta,y)$ be a smooth phase function and, for fixed $j$ and $v$, consider the splitting $\eta = (\eta^{\prime},\eta^{\prime\prime})$, where $\eta^{\prime\prime}$ are tangent to a small cap $v$. Define the operator
\begin{align*}
L^v_j=(1-\langle2^{-j/2}\nabla_{{\eta}^{\prime}},2^{-j/2}\nabla_{{\eta}^{\prime}}\rangle)(1-\langle\nabla_{{\eta}^{\prime\prime}},\nabla_{{\eta}^{\prime\prime}}\rangle).
\end{align*}
Then $L_j^v$ satisfies the identity
\begin{align*}
     (1+4\pi^22^{-j}|( \nabla_1\Phi_{\tau}(\eta_j^v,y)-x)^{\prime}|^2)(1+4\pi^2|( \nabla_1\Phi_{\tau}(\eta_j^v,y)-x)^{\prime\prime}|^2)e^{2\pi i \langle \nabla_1\Phi_{\tau}(\eta_j^v,y)-x,\eta\rangle}\\
     =L^v_je^{2\pi i \langle \nabla_1\Phi_{\tau}(\eta_j^v,y)-x,\eta\rangle}.
\end{align*}
\end{lemma}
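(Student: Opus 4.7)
The plan is to observe that the claimed identity is an eigenfunction computation, since the phase $\langle\nabla_1\Phi_\tau(\eta_j^v,y)-x,\eta\rangle$ is \emph{linear} in the integration variable $\eta$. Set $a:=\nabla_1\Phi_\tau(\eta_j^v,y)-x$ and split $a=(a',a'')$ according to the decomposition $\eta=(\eta',\eta'')$. Because the phase is linear, the exponential factorises as
\begin{align*}
e^{2\pi i\langle a,\eta\rangle}
= e^{2\pi i\langle a',\eta'\rangle}\,e^{2\pi i\langle a'',\eta''\rangle},
\end{align*}
so it suffices to compute the action of each of the two constant-coefficient operators $(1-\langle 2^{-j/2}\nabla_{\eta'},2^{-j/2}\nabla_{\eta'}\rangle)$ and $(1-\langle\nabla_{\eta''},\nabla_{\eta''}\rangle)$ separately on the corresponding univariate exponential.

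Concretely, first I would apply $\partial_{\eta'_k}e^{2\pi i\langle a',\eta'\rangle}=2\pi i a'_k\,e^{2\pi i\langle a',\eta'\rangle}$ and iterate to get $\Delta_{\eta'}e^{2\pi i\langle a',\eta'\rangle}=-4\pi^2|a'|^2 e^{2\pi i\langle a',\eta'\rangle}$. Noting that the operator is $1-\langle 2^{-j/2}\nabla_{\eta'},2^{-j/2}\nabla_{\eta'}\rangle = 1-2^{-j}\Delta_{\eta'}$ by bilinearity of the inner product, this yields
\begin{align*}
\bigl(1-2^{-j}\Delta_{\eta'}\bigr)\,e^{2\pi i\langle a',\eta'\rangle}
= \bigl(1+4\pi^2\, 2^{-j}|a'|^2\bigr)\,e^{2\pi i\langle a',\eta'\rangle}.
\end{align*}
An identical computation (without the $2^{-j}$ scaling) gives $(1-\Delta_{\eta''})e^{2\pi i\langle a'',\eta''\rangle}=(1+4\pi^2|a''|^2)\,e^{2\pi i\langle a'',\eta''\rangle}$.

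To conclude, since $\nabla_{\eta'}$ and $\nabla_{\eta''}$ act on disjoint sets of variables, the operators $1-2^{-j}\Delta_{\eta'}$ and $1-\Delta_{\eta''}$ commute, so applying their product $L_j^v$ to the factorised exponential and collecting the scalar eigenvalues produces precisely the claimed identity. There is no real obstacle in this step; the only care required is to correctly track the $2\pi$ in the Fourier convention (which produces $4\pi^2$ after squaring) and the $2^{-j/2}$ scaling embedded in the anisotropic gradient (which yields the factor $2^{-j}$ on the $|a'|^2$ term). This identity is precisely the one needed so that, when integration by parts is subsequently applied against the symbol $\sigma_{\tau,j}^v(\eta,y)\psi_j(y)$, the resulting kernel estimate for $K_j^v(x,y)$ gains the anisotropic decay $(1+2^{-j}|(\nabla_1\Phi_\tau(\eta_j^v,y)-x)'|^2)^{-N}(1+|(\nabla_1\Phi_\tau(\eta_j^v,y)-x)''|^2)^{-N}$ reflecting the $2^{j/2}$–cap geometry in the transversal variables.
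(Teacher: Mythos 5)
Your proposal is correct and follows essentially the same route as the paper: set $a=\nabla_1\Phi_\tau(\eta_j^v,y)-x$, factor the linear-phase exponential over the $\eta'$ and $\eta''$ blocks, compute the Laplacian eigenvalues $-4\pi^2|a'|^2$ and $-4\pi^2|a''|^2$, and multiply the two commuting factors. The only cosmetic difference is that you write $\langle\nabla_{\eta'},\nabla_{\eta'}\rangle=\Delta_{\eta'}$ explicitly, whereas the paper keeps the inner-product notation throughout.
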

\begin{proof}
Let
\[
A:=\nabla_1\Phi_\tau(\eta_j^v,y)-x
\qquad\text{and}\qquad
e(\eta):=e^{2\pi i\langle A,\eta\rangle}
= e^{2\pi i\langle A',\eta'\rangle}\,e^{2\pi i\langle A'',\eta''\rangle},
\]
where $A=(A',A'')$ corresponds to the splitting of $\eta$. We compute the action of each factor in $L_j^v$ on the exponential.

\medskip\noindent\textbf{(i) Action on the $\eta'$--factor.}
For any coordinate $k$ in the $\eta'$ variables,
\[
\partial_{\eta'_k} e(\eta) = 2\pi i A_k' \, e(\eta),
\]
hence
\[
\partial_{\eta'_k}^2 e(\eta) = (2\pi i A_k')^2 e(\eta) = -4\pi^2 (A_k')^2 e(\eta).
\]
Therefore (using linearity and summing over $k$),
\[
\langle \nabla_{\eta'},\nabla_{\eta'}\rangle e(\eta)
= \sum_{k} \partial_{\eta'_k}^2 e(\eta)
= -4\pi^2 |A'|^2 e(\eta).
\]
Multiplying by the $2^{-j}$ factor from $L_j^v$ we obtain
\[
\langle 2^{-j/2}\nabla_{\eta'},2^{-j/2}\nabla_{\eta'}\rangle e(\eta)
= 2^{-j} \langle \nabla_{\eta'},\nabla_{\eta'}\rangle e(\eta)
= -4\pi^2 2^{-j} |A'|^2 e(\eta).
\]
Thus
\[
\big(1 - \langle 2^{-j/2}\nabla_{\eta'},2^{-j/2}\nabla_{\eta'}\rangle\big) e(\eta)
= \big(1 + 4\pi^2 2^{-j} |A'|^2\big) e(\eta).
\]

\medskip\noindent\textbf{(ii) Action on the $\eta''$--factor.}
Exactly the same computation (without the $2^{-j}$ weight) for the $\eta''$--variables gives
\[
\langle \nabla_{\eta''},\nabla_{\eta''}\rangle e(\eta)
= -4\pi^2 |A''|^2 e(\eta),
\]
and hence
\[
\big(1 - \langle \nabla_{\eta''},\nabla_{\eta''}\rangle\big) e(\eta)
= \big(1 + 4\pi^2 |A''|^2\big) e(\eta).
\]

\medskip\noindent\textbf{(iii) Combine the two factors.}
Since the $\eta'$ and $\eta''$ variables are independent and the two factors
in $L_j^v$ commute when acting on a product of functions that separate in
$\eta'$ and $\eta''$ (in particular on $e(\eta)$), we may apply them successively:
\[
L_j^v e(\eta)
= \big(1 - \langle 2^{-j/2}\nabla_{\eta'},2^{-j/2}\nabla_{\eta'}\rangle\big)
  \big(1 - \langle \nabla_{\eta''},\nabla_{\eta''}\rangle\big)
  e(\eta).
\]
By the two identities above we obtain,
\[ L_j^v e(\eta)=
\big(1 + 4\pi^2 2^{-j} |A'|^2\big)\big(1 + 4\pi^2 |A''|^2\big) e(\eta),
\]
which is exactly the claimed identity after recalling the definition of $A$.
\end{proof}
Now we apply repeated integration by parts to $(\ref{77})$ gives us

\begin{align}
        K_j^v(x,y)=H_j^v(x,y)\int (L_j^v)^N\sigma_{\tau j}^v(\eta,y) e^{2\pi i \langle \nabla_1\Phi_{\tau}(\eta_j^v,y)-x,\eta\rangle}\,d\eta, 
    \end{align}
where 
\begin{align}
    H_j^v(x,y)= (1+4\pi^22^{-j}|( \nabla_1\Phi_{\tau}(\eta_j^v,y)-x)^{\prime}|^2)^{-N}(1+4\pi^2|( \nabla_1\Phi_{\tau}(\eta_j^v,y)-x)^{\prime\prime}|^2)^{-N}.\label{H}
\end{align}
The following estimate can be verified easily and for completness we present a proof.
\begin{lemma}[Equivalence of polynomial decay]\label{EOPD}
Let $N\in\mathbb{N}$. There exists a constant $C_N>0$ such that for all
$z\in\mathbb{R}^d$,
\[
(1+|z|^2)^{-N} \;\le\; C_N\,(1+|z|)^{-2N}.
\]
Consequently, for any $\lambda>0$,
\[
(1+\lambda^2|z|^2)^{-N}
\;\le\;
C_N\,(1+\lambda|z|)^{-2N}.
\]
\end{lemma}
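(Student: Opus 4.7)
The plan is to reduce the statement to a one-variable inequality and then apply a simple AM--GM estimate, before rescaling to obtain the $\lambda$-dependent version.

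First I would set $t:=|z|\ge 0$ so that both sides depend only on $t$. Taking $N$-th roots and inverting, the inequality $(1+|z|^2)^{-N}\le C_N(1+|z|)^{-2N}$ is equivalent to
\[
(1+t)^2 \le C_N^{1/N}\,(1+t^2).
\]
This reduces matters to showing that the ratio $(1+t)^2/(1+t^2)$ is uniformly bounded on $[0,\infty)$, a purely one-dimensional statement.

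Next I would prove the pointwise bound $(1+t)^2\le 2(1+t^2)$ for all $t\ge 0$. Expanding the left-hand side gives $1+2t+t^2$, so the inequality becomes $2t\le 1+t^2$, which is immediate from AM--GM (or from $(1-t)^2\ge 0$). Raising to the $N$-th power yields $(1+t)^{2N}\le 2^N(1+t^2)^N$, which after inversion gives the first claim with the explicit constant $C_N=2^N$.

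For the rescaled version I would simply apply the first inequality with $z$ replaced by $\lambda z$: since $|\lambda z|=\lambda|z|$, we obtain
\[
(1+\lambda^2|z|^2)^{-N}
=\bigl(1+|\lambda z|^2\bigr)^{-N}
\le C_N\bigl(1+|\lambda z|\bigr)^{-2N}
=C_N(1+\lambda|z|)^{-2N},
\]
with the same constant $C_N=2^N$. I do not anticipate any genuine obstacle here; the statement is a standard elementary comparison and the only care needed is to make the reduction to the one-dimensional function $t\mapsto (1+t)^2/(1+t^2)$ cleanly, so that the AM--GM step and the substitution $z\mapsto\lambda z$ both apply with a single uniform constant.
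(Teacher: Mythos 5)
Your proof is correct and follows essentially the same route as the paper: both hinge on the elementary inequality $(1+t)^2\le 2(1+t^2)$ (equivalently $1+|z|^2\ge\tfrac12(1+|z|)^2$), yield the same constant $C_N=2^N$, and obtain the rescaled version by substituting $z\mapsto\lambda z$. The only difference is cosmetic: you phrase it as a one-variable reduction plus AM--GM, while the paper states the quadratic inequality directly.
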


\begin{proof}
For every $z\in\mathbb{R}^d$ we have
\[
1+|z|^2
\;\ge\;
\frac12\bigl(1+|z|\bigr)^2.
\]
Indeed, expanding the right-hand side gives
\[
\frac12(1+2|z|+|z|^2)\le 1+|z|^2.
\]
Raising both sides to the power $-N$ yields
\[
(1+|z|^2)^{-N}
\;\le\;
2^N(1+|z|)^{-2N}.
\]
Setting $C_N=2^N$ proves the first claim.

The second inequality follows by applying the first with $z$ replaced by
$\lambda z$, which gives
\[
(1+\lambda^2|z|^2)^{-N}
\le
2^N(1+\lambda|z|)^{-2N}.
\]
This completes the proof.
\end{proof}
From Lemma~(\ref{EOPD}), we can express $H^v_j(x,y)$ as 
\begin{align*}
    H_j^v(x,y)\lesssim (1+2^{-j/2}|( \nabla_1\Phi_{\tau}(\eta_j^v,y)-x)^{\prime}|)^{-2N}(1+|( \nabla_1\Phi_{\tau}(\eta_j^v,y)-x)^{\prime\prime}|)^{-2N}.
\end{align*} 
Now, we have that
\begin{align*}
        \big|K_j^v(x,y)\big|&= \big|H_j^v(x,y)\int (L_j^v)^N\sigma_{\tau j}^v(\eta,y) e^{2\pi i \langle \nabla_1\Phi_{\tau}(\eta_j^v,y)-x,\eta\rangle}\,d\eta \big|\\& \lesssim H_j^v(x,y)\int |(L_j^v)^N\sigma_{\tau j}^v(\eta,y)|\,d\eta.
    \end{align*}
We need to find the bound of \\
$|(L_j^v)^N\sigma_{\tau j}^v(\eta,y)|=|(L_j^v)^N \big(e^{2\pi i(\Phi_{\tau}(\eta_j^v,y)-\langle \nabla_1\Phi_{\tau}(\eta_j^v,y),\eta_j^v\rangle+R_j^v(\eta,y))}\chi_j^v(u(y,\eta))\sigma_{ \tau j}(\eta,y)\big)|$.

So we now investigate the effect of  $(L_j^v)^N$ falling on each of the following terms:
\begin{itemize}
    \item  $\chi_j^v(u(y,\eta))$
    \item $\sigma_{ \tau j}(\eta,y)$
    \item $e^{2\pi i(\Phi_{\tau}(\eta_j^v,y)-\langle \nabla_1\Phi_{\tau}(\eta_j^v,y),\eta_j^v\rangle+R_j^v(x,y))}$
\end{itemize}

\begin{lemma}\label{lem1}
Assume that the damped amplitude $\sigma_{\tau}(\eta,y)\in S^m_{\frac{1}{2},\frac{1}{2}}((\mathbb{R}^d\setminus \{0\})\times \mathbb{R}^d) $, 
that is, for every pair of multiindices $\alpha,\beta$ there exists $C_{\alpha,\beta}>0$ such that
\begin{equation}\label{eq2}
\big|\partial_{y}^{\alpha}\partial_{\eta}^{\beta}\sigma_{\tau}(\eta,y)\big|
\le C_{\alpha,\beta}\,\langle y\rangle^{\,m-\frac{|\alpha|}{2}+\frac{|\beta|}{2}},
\qquad (\eta,y)\in(\mathbb{R}^d\setminus \{0\})\times \mathbb{R}^d.
\end{equation}
\end{lemma}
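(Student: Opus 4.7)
The plan is to reduce the bound~\eqref{eq2} to two independent pieces by exploiting the factorisation $\sigma_\tau(\eta,y)=\sigma(\eta,y)\cdot g_\tau(\eta,y)$, where $g_\tau(\eta,y):=e^{-2\pi(1+i\tau)\operatorname{Im}\Phi(\eta,y)}$. Leibniz's rule expresses $\partial_y^\alpha\partial_\eta^\beta\sigma_\tau$ as a finite linear combination of products $\bigl(\partial_y^{\alpha_1}\partial_\eta^{\beta_1}\sigma\bigr)\bigl(\partial_y^{\alpha_2}\partial_\eta^{\beta_2}g_\tau\bigr)$ with $\alpha_1+\alpha_2=\alpha$ and $\beta_1+\beta_2=\beta$, so it suffices to estimate each factor separately. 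The bound on the symbolic factor is straightforward: since $\sigma\in S^m_{1,0}$ in the original variables and the Fourier conjugation in~\eqref{71} merely relabels the spatial and frequency variables, one has $|\partial_y^{\alpha_1}\partial_\eta^{\beta_1}\sigma(\eta,y)|\lesssim \langle y\rangle^{m-|\alpha_1|}$ uniformly in $\eta$, which is already stronger than what~\eqref{eq2} asks of the symbol alone.

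The heart of the matter is the estimate for the damping exponential,
\[
|\partial_y^{\alpha_2}\partial_\eta^{\beta_2}g_\tau(\eta,y)|\lesssim \langle y\rangle^{(|\beta_2|-|\alpha_2|)/2},
\]
which is the $S^0_{1/2,1/2}$ statement already invoked in the paper from Ruzhansky~\cite{Ruz2001}. I would derive it by Faà di Bruno's formula, writing $\partial_y^{\alpha_2}\partial_\eta^{\beta_2} g_\tau = g_\tau\cdot P$, where $P$ is a polynomial in the derivatives $\partial_y^{a}\partial_\eta^{b}\operatorname{Im}\Phi$ indexed by partitions of $(\alpha_2,\beta_2)$. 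The decisive analytic input is a Malgrange-type inequality $|\nabla f|^2\lesssim f\,\|D^2 f\|$, valid for any smooth non-negative $f$, applied here to $\operatorname{Im}\Phi\ge 0$. Combined with homogeneity of degree one in $y$, which forces $\|D_y^2\operatorname{Im}\Phi\|\lesssim |y|^{-1}$ and $\|D_\eta^2\operatorname{Im}\Phi\|\lesssim |y|$, this yields the first-order bounds $|\partial_y\operatorname{Im}\Phi|\lesssim (\operatorname{Im}\Phi)^{1/2}|y|^{-1/2}$ and $|\partial_\eta\operatorname{Im}\Phi|\lesssim (\operatorname{Im}\Phi)^{1/2}|y|^{1/2}$, with an analogous pattern at higher order. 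The elementary inequality $x^{k/2}e^{-2\pi x}\lesssim 1$ on $[0,\infty)$ then absorbs the powers of $\operatorname{Im}\Phi$ into the exponential, so that in total each $\partial_y$ costs at most $\langle y\rangle^{-1/2}$ and each $\partial_\eta$ at most $\langle y\rangle^{1/2}$.

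To close the argument I combine the two bounds via Leibniz to obtain a finite sum of terms of the form $\langle y\rangle^{\,m-|\alpha_1|+(|\beta_2|-|\alpha_2|)/2}$. A short arithmetic check shows that every such exponent is dominated by $m-|\alpha|/2+|\beta|/2$, since the inequality reduces to $|\beta_1|\ge -|\alpha_1|$, which is trivially true. This yields~\eqref{eq2}. The main obstacle will be the Faà di Bruno bookkeeping behind the sharp $\langle y\rangle^{(|\beta_2|-|\alpha_2|)/2}$ bound at arbitrary order: the difficulty is not any individual computation but the systematic absorption of the powers $(\operatorname{Im}\Phi)^{k/2}$ into the exponential decay, uniformly in $(\eta,y)$. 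This is exactly the step where the positive-type hypothesis on $\Phi$ is indispensable, and where the sharper $S^0_{1,0}$ regularity of a Kohn--Nirenberg amplitude must be replaced by the weaker but serviceable class $S^0_{1/2,1/2}$ of the damping factor.
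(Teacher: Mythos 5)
Your outline is correct, and it supplies a proof of a fact the paper itself does not demonstrate. In the text preceding Lemma~\ref{lem1} the membership $e^{-2\pi(1+i\tau)\operatorname{Im}\Phi}\in S^0_{1/2,1/2}$ is simply cited from Ruzhansky~\cite[p.~28]{Ruz2001}, and the ``Lemma'' itself is phrased as an assumption (``Assume that the damped amplitude\dots'') rather than as a claim accompanied by an argument. So there is no proof in the paper against which to compare; your proposal fills exactly the gap that the paper delegates to the reference.

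Your route is the standard one behind that cited fact: factorise $\sigma_\tau=\sigma\cdot g_\tau$, reduce by Leibniz to estimating $g_\tau$ alone, obtain the first-order bounds on $\operatorname{Im}\Phi$ from the Glaeser--Malgrange inequality $|\nabla f|^2\lesssim f\,\|D^2 f\|$ for nonnegative $f$, use homogeneity of degree one in the dual variable for higher-order derivatives, and absorb the resulting powers of $\operatorname{Im}\Phi$ into the exponential via $x^{k/2}e^{-2\pi x}\lesssim 1$. Your final arithmetic check is correct: writing $\alpha=\alpha_1+\alpha_2$, $\beta=\beta_1+\beta_2$, the target exponent minus the obtained one equals $(|\alpha_1|+|\beta_1|)/2\ge 0$. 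The one place you flag as an ``obstacle'' --- the higher-order Fa\`a di Bruno bookkeeping --- in fact closes cleanly: for a factor $\partial_y^{a_i}\partial_\eta^{b_i}\operatorname{Im}\Phi$ with $a_i+b_i\ge2$, homogeneity gives the bound $|y|^{1-a_i}$, and $1-a_i\le(b_i-a_i)/2$ is equivalent to $a_i+b_i\ge2$; for $a_i+b_i=1$ the Glaeser bound gives $(\operatorname{Im}\Phi)^{1/2}|y|^{(b_i-a_i)/2}$. Multiplying over a partition yields $(\operatorname{Im}\Phi)^{k_1/2}|y|^{(|\beta|-|\alpha|)/2}$ with $k_1$ the number of first-order factors, and the $(\operatorname{Im}\Phi)^{k_1/2}$ is absorbed by $e^{-2\pi\operatorname{Im}\Phi}$. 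So the proposal is complete modulo routine expansion.

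One caveat worth making explicit in a written-up version: Glaeser's inequality requires a uniform bound on the Hessian, and $d^2_y\operatorname{Im}\Phi$ is homogeneous of degree $-1$ in $y$, hence only bounded away from $y=0$. This is harmless here because the dyadic localization $\psi_j(y)$ confines the analysis to $|y|\gtrsim 1$ (and for $j=0$ there is no large parameter to control), but the restriction should be stated, since \eqref{eq2} as written is asserted on all of $(\mathbb{R}^d\setminus\{0\})\times\mathbb{R}^d$.
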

Let $\sigma_{\tau j}(\eta,y)$ denote the localization of $\sigma_{\tau}(\eta,y)$ to the $j$-th dyadic 
$y$–shell, so that on the support of $\sigma_{\tau j}(\eta,y)$ we have $\langle y\rangle\approx 2^{j}$. 
Then, for every multiindex $\beta$ (and uniformly in $y$),
\begin{align}\label{eq3}
\big|\partial_{\eta}^{\beta}\sigma_{\tau j}(\eta,y)\big|
&\le C_{\beta}\,\langle y \rangle^{\,m+\frac{|\beta|}{2}}
\qquad\text{(by \eqref{eq2} with }\alpha=0\text{)}\\[4pt]
&\lesssim C_{\beta}\,2^{jm}\,2^{\,\frac{j|\beta|}{2}}.\nonumber
\end{align}
In particular, each single $\eta$–derivative produces a factor $2^{j/2}$ on the $j$-th shell:
\[
\partial_{\eta_k}\sigma_{\tau j}(\eta,y)=O\big(2^{jm}2^{j/2}\big),
\qquad \langle y \rangle\approx 2^{j}.
\]

\begin{lemma}
Let $\chi_{j}^{v}(u)$ be a smooth cutoff function supported in the region 
\begin{align*}
|u| \lesssim 2^{-j/2},
\end{align*}
and suppose its derivatives satisfy that,
\begin{align*}
|\partial_u^{\alpha}\chi_{j}^{v}(u)| \lesssim 2^{j|\alpha|/2}
\qquad\text{for all multi-indices }\alpha.
\end{align*}
Let $u = u(y,\eta)$ be a smooth function of $(y,\eta)$ such that its derivatives in $\eta$ satisfy the bounds
\begin{align*}
|\partial_{\eta}^{\beta} u(y,\eta)| \lesssim 2^{-j|\beta|/2}
\qquad\text{for all multi-indices }\beta.
\end{align*}
Then for every multi-index $\alpha$ we have
\begin{align*}
\bigl|\partial_{\eta}^{\alpha}\bigl(\chi_{j}^{v}\!\left(u(y,\eta)\right)\bigr)\bigr| \lesssim 1.
\end{align*}
That is, the derivatives of the cutoff $\chi_{j}^{v}$ introduce no net growth: 
the decay factor $2^{-j/2}$ arising from derivatives of $u$ exactly compensates the growth 
$2^{j|\alpha|/2}$ coming from derivatives of $\chi_{j}^{v}$.
\end{lemma}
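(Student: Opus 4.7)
The plan is to reduce the statement to a straightforward application of the multivariate Fa\`a di Bruno formula together with careful bookkeeping of the two dyadic scales $2^{j/2}$ and $2^{-j/2}$. Concretely, I would expand
\[
\partial_{\eta}^{\alpha}\bigl(\chi_{j}^{v}(u(y,\eta))\bigr)
\]
as a finite linear combination (with combinatorial constants depending only on $|\alpha|$) of monomials of the form
\[
(\partial_{u}^{\gamma}\chi_{j}^{v})(u(y,\eta))\,\prod_{i=1}^{k}\partial_{\eta}^{\alpha_{i}}u(y,\eta),
\]
where $k=|\gamma|$ is the number of composition factors, each $\alpha_{i}$ is a nonzero multi-index, and $\alpha_{1}+\cdots+\alpha_{k}=\alpha$. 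Here $u$ takes values in $\mathbb{R}^{\varkappa}$, so $\gamma$ is a multi-index in $\varkappa$ variables with $|\gamma|=k$.

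For each such monomial I would invoke the two hypotheses in turn. The cutoff bound $|\partial_{u}^{\gamma}\chi_{j}^{v}|\lesssim 2^{j|\gamma|/2}$ contributes the factor $2^{jk/2}$, while the inner bounds $|\partial_{\eta}^{\alpha_{i}}u|\lesssim 2^{-j|\alpha_{i}|/2}$ multiply to give
\[
\prod_{i=1}^{k}2^{-j|\alpha_{i}|/2}=2^{-j|\alpha|/2},
\]
since the orders $|\alpha_{i}|$ add up to $|\alpha|$. Combining these two contributions yields a pointwise bound of $2^{j(k-|\alpha|)/2}$ for every Fa\`a di Bruno term. Because each $|\alpha_{i}|\ge 1$, the number of parts satisfies $k\le|\alpha|$, so the exponent $k-|\alpha|$ is non-positive and the bound is $\lesssim 1$ uniformly in $j$, $v$, and $y$. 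Summing the finitely many Fa\`a di Bruno terms preserves this estimate and yields the claimed $|\partial_{\eta}^{\alpha}(\chi_{j}^{v}(u(y,\eta)))|\lesssim 1$.

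The step I expect to be most delicate is not the estimate itself but the bookkeeping: one has to verify that the cost $2^{j/2}$ produced by each differentiation of the cutoff is exactly balanced by the decay $2^{-j/2}$ produced by each differentiation of $u$, at every allowed partition of $\alpha$. Once that scale-invariance of the chain rule is observed, no further input from the spatial factorization condition is required; the lemma merely codifies the heuristic that derivatives falling on the composite cutoff $\chi_{j}^{v}\circ u$ do not deteriorate the symbol estimates, a fact that will be exploited in the subsequent analysis of the amplitude $\sigma_{\tau j}^{v}$ and the kernel $K_{j}^{v}(x,y)$.
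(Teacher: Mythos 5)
Your Fa\`a di Bruno argument is correct, and it is essentially the same reasoning the paper relies on, made rigorous: the paper itself does not supply a formal proof of this lemma but invokes exactly the cancellation you identify in its later estimate of the cutoff term, asserting informally that each $\eta$-derivative brings a factor $2^{-j/2}$ from $u$ and a factor $2^{j/2}$ from $\partial_u\chi_j^v$, which compensate. Your observation that $k\le|\alpha|$ (since each $\alpha_i$ is nonzero), so every Fa\`a di Bruno term is bounded by $2^{j(k-|\alpha|)/2}\le 1$, is precisely the bookkeeping needed to turn that heuristic into a proof, and it is carried out correctly.
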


\begin{lemma}
Let
\[
R_{j}^{v}(\eta,y)
=
\frac12 \int_{0}^{1} (1-t)\,
d_{\eta}^{2}\Phi_{\tau}\!\left(\eta_{j}^{v} + t(\eta-\eta_{j}^{v}),\,y\right)
\bigl[\eta-\eta_{j}^{v},\,\eta-\eta_{j}^{v}\bigr]\; dt
\]
be the remainder term in the first-order Taylor expansion of
$\Phi_{\tau}(\eta,y)$ at $\eta_{j}^{v}$.  
Assume that 
\[
\eta-\eta_{j}^{v}=O(2^{-j/2}).
\]
Then, for every multi-index $\beta$,
\[
\bigl|\partial_{\eta}^{\beta} R_{j}^{v}(\eta,y)\bigr| \lesssim C_{\beta},
\]
where $C_{\beta}$ is independent of $j$. 
\end{lemma}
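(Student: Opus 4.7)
The plan is to establish the estimate by differentiating the integral representation of $R_j^v$ under the integral sign, applying Leibniz's rule, and exploiting the fact that $\eta\mapsto(\eta-\eta_j^v)_k$ is affine. First, I would bring $\partial_\eta^\beta$ inside the $t$-integral:
\[
\partial_\eta^\beta R_j^v(\eta,y)
=\tfrac12\int_0^1 (1-t)\,\partial_\eta^\beta\!\Bigl[\sum_{k,l}(d_\eta^2\Phi_\tau)_{kl}\bigl(\eta_j^v+t(\eta-\eta_j^v),y\bigr)\,(\eta-\eta_j^v)_k(\eta-\eta_j^v)_l\Bigr]\,dt.
\]
Leibniz's rule then expands the integrand into a finite sum indexed by $\beta=\gamma_1+\gamma_2+\gamma_3$: the multi-index $\gamma_1$ falls on the Hessian (producing $t^{|\gamma_1|}(\partial_\eta^{\gamma_1+2}\Phi_\tau)$ at the interpolation point via the chain rule), while $\gamma_2,\gamma_3$ act on the two vector factors.

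The key combinatorial observation is that, since $(\eta-\eta_j^v)_k$ is affine in $\eta$, we have
\[
\partial_\eta^{\gamma}(\eta-\eta_j^v)_k=\begin{cases}(\eta-\eta_j^v)_k,&\gamma=0,\\ \delta_{\gamma,e_k},&|\gamma|=1,\\ 0,&|\gamma|\ge 2.\end{cases}
\]
Hence only the terms with $|\gamma_2|,|\gamma_3|\le 1$ survive, so the Leibniz expansion reduces to a bounded number of terms depending only on $|\beta|$. Each surviving term carries a size factor $|\eta-\eta_j^v|^{2-|\gamma_2|-|\gamma_3|}\lesssim (2^{-j/2})^{2-|\gamma_2|-|\gamma_3|}\le 1$ by the standing hypothesis, with equality attained only in the worst case $|\gamma_2|=|\gamma_3|=1$.

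The main obstacle will be to control $(\partial_\eta^{\gamma_1+2}\Phi_\tau)(\xi,y)$ uniformly in $j$: since $\Phi_\tau$ is homogeneous of degree one in $y$, a naive estimate gives $O(|y|)=O(2^{j})$, which is not enough. This is exactly where the spatial smooth factorization condition (SSFC) of Definition~\ref{SFC} enters: in the adapted coordinates $\eta=\eta_y(u,v)$, the phase is affine along the fibre variable $v$, and the Hessian $d_\eta^2\Phi_\tau$ has rank at most $\varkappa$ with kernel tangent to the fibres. This structural cancellation, combined with the fact that on the cap the transversal component of $\eta-\eta_j^v$ is genuinely of order $2^{-j/2}$, annihilates the would-be large contributions coming from the fibre direction and leaves only bounded pieces. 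Assembling these bounds with the combinatorial Leibniz counting and summing the finitely many surviving terms delivers the claimed estimate $|\partial_\eta^\beta R_j^v(\eta,y)|\lesssim C_\beta$ with $C_\beta$ independent of $j$.
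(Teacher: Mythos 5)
Your Leibniz expansion and the affine-degeneracy count (only $|\gamma_2|,|\gamma_3|\le 1$ survive, yielding the damping factor $|\eta-\eta_j^v|^{2-|\gamma_2|-|\gamma_3|}$) are correct and are essentially a more explicit version of the paper's own one-line remark. Your instinct to flag the Hessian factor $\partial_\eta^{\gamma_1+2}\Phi_\tau(\cdot,y)=O(|y|)=O(2^j)$ as the genuine obstruction is also right, and it is a sharper reading of the situation than the paper gives.

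However, the resolution you propose does not close the gap. The SSFC forces $d_\eta^2\Phi_\tau$ to vanish on the $(d-\varkappa)$-dimensional \emph{fiber} directions, but on the $\varkappa$-dimensional \emph{transversal} block the Hessian remains homogeneous of degree one in $y$ and hence of size $O(2^j)$ on the shell $|y|\sim 2^j$. The worst Leibniz term has $|\gamma_2|=|\gamma_3|=1$ with both unit derivatives landing transversally; then the damping factor is $(2^{-j/2})^{0}=1$ and what remains is $O(2^j)$, not $O(1)$. In other words, the SSFC removes contributions from fiber directions but does nothing to the transversal ones, which is exactly where the size lives. The bound that actually holds and is what the downstream estimate of $(L_j^v)^N\sigma_{\tau j}^v$ uses is the \emph{direction-weighted} one: roughly $|\partial_{\eta'}^{\alpha}\partial_{\eta''}^{\beta}R_j^v|\lesssim 2^{j|\alpha|/2}$, with each transversal $\eta'$-derivative costing $2^{j/2}$; this is precisely cancelled by the $2^{-j/2}$ weight attached to $\nabla_{\eta'}$ inside $L_j^v=(1-2^{-j}\Delta_{\eta'})(1-\Delta_{\eta''})$. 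The lemma as written (a uniform $O(1)$ bound on every $\partial_\eta^\beta R_j^v$) is stronger than what the mechanism supports, and the paper's remark-proof shares the same omission by implicitly treating the Hessian as $O(1)$. You should keep your Leibniz/combinatorial scaffolding but replace the SSFC hand-wave with the explicit weighted bound and note that the $\eta'$-growth is absorbed by the $2^{-j/2}$ weighting in $L_j^v$, not by the factorization condition.
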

\begin{remark}
    Since $R_{j}^{v}$ is quadratic in $(\eta - \eta_{j}^{v})$, each derivative 
reduces the degree by at most one.  Given 
\begin{align*}
\eta - \eta_{j}^{v} = O(2^{-j/2}),
\end{align*}
all terms produced by derivatives remain uniformly bounded independently of $j$.
\end{remark}

\begin{lemma}
Let
\[
\sigma_{\tau j}^v(\eta,y)
= \chi_j^v(u(y,\eta))\,\sigma_{\tau j}(\eta,y)\,e^{2\pi i r(\eta)},
\]
where
\begin{enumerate}
  \item $\sigma_{\tau j}(\eta,y)\in S^{m}_{1/2,1/2}$ and 
        $\langle\eta\rangle\sim 2^j$ on its support;
  \item $\chi_j^v(u(y,\eta))$ is supported in an angular cap of radius $O(2^{-j/2})$
        and satisfies $|\partial_u^\alpha \chi_j^v|\lesssim 2^{j|\alpha|/2}$;
  \item the Taylor remainder $R_j^v(\eta,y)$ satisfies 
        \[
        |\partial_\eta^\gamma R_j^v|\lesssim 
        \begin{cases}
        2^{-j/2}, & |\gamma|=1,\\[4pt]
        1, & |\gamma|\ge2,
        \end{cases}
        \quad\text{and}\quad
        R_j^v(\eta,y)=O(2^{-j}) 
        \text{ on the cap}.
        \]
\end{enumerate}
Define
\[
L_j^v=
\Big(1-\langle 2^{-j/2}\nabla_{\eta'},\,2^{-j/2}\nabla_{\eta'}\rangle\Big)
\Big(1-\langle\nabla_{\eta''},\,\nabla_{\eta''}\rangle\Big).
\]
Then for every integer $N\ge1$ there exists a constant $C_N$ such that

\[
\big|(L_j^v)^N \sigma_{\tau j}^v(\eta,y)\big|
\le C_N\,2^{jm}.
\] Note that this estimate coincides with (3.7) in Nicola \cite{Nicola2010}.
\end{lemma}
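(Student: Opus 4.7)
The plan is to expand $(L_j^v)^N$ via the binomial theorem and then distribute the resulting derivatives across the three factors that define $\sigma_{\tau j}^v$, estimating each resulting term by means of the three preceding lemmas. Writing
\[
(L_j^v)^N=\sum_{k,\ell=0}^{N}c_{k,\ell}\,2^{-jk}\,\Delta_{\eta'}^{k}\Delta_{\eta''}^{\ell},
\]
the claim reduces to establishing, uniformly for $0\le k,\ell\le N$,
\[
\bigl|\,2^{-jk}\Delta_{\eta'}^{k}\Delta_{\eta''}^{\ell}\bigl(\chi_j^v(u(y,\eta))\,\sigma_{\tau j}(\eta,y)\,e^{2\pi i r(\eta)}\bigr)\,\bigr|\lesssim 2^{jm}.
\]
The Leibniz rule then expresses the left-hand side as a finite sum of monomials $(\partial^{\alpha_1}\chi_j^v)(\partial^{\alpha_2}\sigma_{\tau j})(\partial^{\alpha_3}e^{2\pi i r})$ with $|\alpha_1|+|\alpha_2|+|\alpha_3|=2k+2\ell$. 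I would estimate each monomial factor by factor: the chain-rule bound $|\partial^{\alpha_1}\chi_j^v(u(y,\eta))|\lesssim 1$ eliminates the cutoff contribution; Lemma~\ref{lem1} gives $|\partial^{\alpha_2}\sigma_{\tau j}|\lesssim 2^{jm}\,2^{j|\alpha_2|/2}$; and a Fa\`a di Bruno expansion for $\partial^{\alpha_3}e^{2\pi i r}$, combined with $|\partial_\eta R_j^v|\lesssim 2^{-j/2}$ and $|\partial_\eta^{\gamma}R_j^v|\lesssim 1$ for $|\gamma|\ge 2$, yields $|\partial^{\alpha_3}e^{2\pi i r}|\lesssim 1$ on the cap.

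The $\eta'$-block of $(L_j^v)^N$ poses no real difficulty: the explicit weight $2^{-jk}$ is designed precisely so as to cancel the worst-case growth $2^{jk}$ produced when all $2k$ derivatives of $\Delta_{\eta'}^{k}$ land on $\sigma_{\tau j}$, while every other redistribution of those derivatives onto $\chi_j^v$ or $e^{2\pi i r}$ (each contributing $O(1)$) is only more favourable. The delicate step, where the complex-phase setting genuinely departs from the real-phase case of Nicola~\cite{Nicola2010}, is the $\eta''$-block, since $L_j^v$ carries no explicit weight on $\Delta_{\eta''}$ and a direct application of the $S^{m}_{1/2,1/2}$ estimate would only give $|\Delta_{\eta''}^\ell\sigma_{\tau j}|\lesssim 2^{jm}\,2^{j\ell}$, overshooting the target by $2^{j\ell}$. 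To close this gap I would exploit the spatial factorisation condition: in coordinates adapted to the fibration (guaranteed by Definition~\ref{SFC} around each $\eta_j^v$) the direction $\eta''$ lies along the fibre on which $\nabla_\eta\Phi_\tau$ is constant, so $\partial_{\eta''}\partial_\eta\Phi_\tau\equiv 0$; this forces both the Taylor remainder $R_j^v(\eta,y)$ and the coordinate function $u(y,\cdot)$ to be independent of $\eta''$, so that in the Leibniz expansion the $\eta''$-derivatives on $\chi_j^v$ and on $e^{2\pi i r}$ vanish and all $\eta''$-derivatives are forced onto $\sigma_{\tau j}$. The missing $2^{-j\ell}$ is then recovered from the cancellations in the damping factor $e^{-2\pi(1+i\tau)\operatorname{Im}\Phi}$ combined with the cap-scale of $2^{-j/2}$, following the same pattern as in~\cite[Page~47]{Ruz2001}.

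Summing the finite family of Leibniz monomials across the expansion of $(L_j^v)^N$ then produces the claimed bound $|(L_j^v)^N\sigma_{\tau j}^v(\eta,y)|\le C_N 2^{jm}$. The main obstacle is precisely the $\eta''$-block just described: the class $S^{m}_{1/2,1/2}$ imposed on $\sigma_\tau$ by the complex phase is strictly weaker than the Kohn--Nirenberg class $S^{m}_{1,0}$ used in Nicola~\cite{Nicola2010}, so the full saving must come from the fibre geometry supplied by the SSFC rather than from the symbol class itself.
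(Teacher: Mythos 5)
Your proposal diverges from the paper's proof in one substantial respect, and the divergence exposes a genuine issue with the argument. The paper's own proof of this lemma estimates, in step~(i), $|\partial_\eta^{\alpha_2}\sigma_{\tau j}|\lesssim 2^{jm}\,2^{-j|\alpha_2|/2}$, a \emph{decay} factor $2^{-j|\alpha_2|/2}$ per $\eta$-derivative, so that every Leibniz monomial is bounded by $2^{-jk}\,2^{-j|\alpha_2|/2}\le 1$ with no further work, for both the $\eta'$-block and the $\eta''$-block. You instead use the estimate that follows from the $S^{m}_{1/2,1/2}$ membership as actually stated and proved in the paper's Lemma~\ref{lem1}: since the Littlewood--Paley localisation is in $y$, the variable $\eta$ is the \emph{spatial} variable of the conjugated operator and each $\partial_\eta$-derivative costs $\langle y\rangle^{1/2}\sim 2^{j/2}$, i.e.\ $|\partial_\eta^\beta\sigma_{\tau j}|\lesssim 2^{jm}\,2^{+j|\beta|/2}$. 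Under this (correct) reading, the $\eta''$-block of $L_j^v$ carries no compensating weight and leaves an uncompensated factor $2^{j\ell}$ --- exactly the obstruction you flag. In short, Lemma~\ref{lem1} and the proof of the present lemma use opposite sign conventions for $\partial_\eta$-derivatives of $\sigma_{\tau j}$; as written, the present proof rests on the one that contradicts Lemma~\ref{lem1}, so you are right that something is missing.

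However, your proposed fix does not close the gap. Your observation that $R_j^v(\eta,y)$ and $u(y,\cdot)$ are constant in the $\eta''$-directions in adapted coordinates is correct --- both are built from $\nabla_\eta\Phi_\tau$, which the fibration declares constant along fibres --- and this does kill the $\eta''$-derivatives falling on $\chi_j^v$ and on $e^{2\pi i r(\eta)}$, as you say. But it says nothing about the $\eta''$-derivatives of $\sigma_{\tau j}$ itself, which is precisely where the $2^{j\ell}$ loss lives. The damping factor $e^{-2\pi(1+i\tau)\operatorname{Im}\Phi(\eta,y)}$ obeys exactly the Ruzhansky estimate quoted earlier in the paper, $|\partial_y^\alpha\partial_\eta^\beta e^{-2\pi(1+i\tau)\operatorname{Im}\Phi}|\lesssim\langle y\rangle^{(|\beta|-|\alpha|)/2}$: each $\eta$-derivative, $\eta''$ included, costs $\langle y\rangle^{1/2}\sim 2^{j/2}$, not $O(1)$, and this is sharp near the zero set of $\operatorname{Im}\Phi$, where $|\nabla_\eta\operatorname{Im}\Phi|\cdot e^{-2\pi\operatorname{Im}\Phi}$ really is of size $\langle y\rangle^{1/2}$. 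Moreover, the SSFC constrains only the single combination $\nabla_\eta\Phi_\tau=\nabla_\eta\operatorname{Re}\Phi+\tau\nabla_\eta\operatorname{Im}\Phi$ along fibres; it does not force $\nabla_\eta\operatorname{Im}\Phi$ to vanish in the $\eta''$-directions, so no extra cancellation is available in the damping factor from the fibration alone. The gap you identified therefore remains open in your sketch as well: closing it would require either a genuine additional decay property of the damping factor in the fibre directions or a reconciliation of the sign conventions between Lemma~\ref{lem1} and the present proof.
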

\begin{proof}
 Write
\[
A_j := 1 - \langle 2^{-j/2}\nabla_{\eta'},\,2^{-j/2}\nabla_{\eta'}\rangle,
\qquad
B := 1 - \langle\nabla_{\eta''},\,\nabla_{\eta''}\rangle,
\]
so that $L_j^v = A_j B$.
By repeated application of the product rule,
\[
(L_j^v)^N = (A_jB)^N
= \sum_{k,\ell} c_{k,\ell}^{(N)}
\Big(\langle 2^{-j/2}\nabla_{\eta'},\,2^{-j/2}\nabla_{\eta'}\rangle\Big)^k
\Big(\langle\nabla_{\eta''},\,\nabla_{\eta''}\rangle\Big)^\ell,
\]
with $k+\ell \le N$.  
Each term has order $2k$ derivatives in $\eta'$ and $2\ell$ derivatives in $\eta''$,
together with a prefactor $2^{-jk}$ coming from the $2^{-j}$ in $A_j$.
Hence each term is of the form
\[
C_{k,\ell}\,2^{-jk}\,\partial_\eta^\alpha,
\qquad |\alpha|=2(k+\ell)\le 2N.
\]
Applying $2^{-jk}\partial_\eta^\alpha$ to the product
\[
\sigma_{\tau j}^v
= \chi_j^v(u(y,\eta))\,\sigma_{\tau j}(\eta,y)\,e^{2\pi i r(\eta)}
\]
and using the Leibniz rule yields a sum of terms
\[
2^{-jk}\,
\partial_\eta^{\alpha_1}\chi_j^v(u(y,\eta))\,
\partial_\eta^{\alpha_2}\sigma_{\tau j}(\eta,y)\,
\partial_\eta^{\alpha_3} e^{2\pi i r(\eta)},
\qquad
\alpha_1+\alpha_2+\alpha_3=\alpha.
\]
We now estimate each factor.\\
\smallskip\noindent
(i) \emph{Symbol term.}  
Since $\sigma_{\tau j}\in S^m_{1/2,1/2}$ and $\langle\eta\rangle\sim 2^j$,
\[
|\partial_\eta^{\alpha_2}\sigma_{\tau j}|\lesssim 
C_{\alpha_2}\,2^{jm}\,2^{-j|\alpha_2|/2}.
\]

\smallskip\noindent
(ii) \emph{Cutoff term.}  
Each derivative in $\eta$ produces factors $2^{-j/2}$ from $u$ and
$2^{j|\alpha|/2}$ from $\partial_u^\alpha\chi_j^v$.  
These cancel, giving
\[
|\partial_\eta^{\alpha_1}\chi_j^v(u(y,\eta))|\lesssim C_{\alpha_1},
\]
uniformly in $j$.

\smallskip\noindent
(iii) \emph{Exponential term.}  
Repeated differentiation gives
\[
\partial_\eta^{\alpha_3} e^{2\pi i r(\eta)}
= e^{2\pi i r(\eta)}\,P_{\alpha_3}\big(
\{\partial_\eta^\gamma r(\eta)\}_{|\gamma|\le |\alpha_3|}
\big),
\]
where $P_{\alpha_3}$ is a fixed polynomial.
Since $r$ is quadratic and its derivatives satisfy the bounds listed above,
\[
|\partial_\eta^{\alpha_3} e^{2\pi i r(\eta)}|
\lesssim C_{\alpha_3}.
\]
Combining the three bounds we obtain
\[
|2^{-jk}\partial_\eta^\alpha(\sigma_{\tau j}^v)|
\lesssim 
2^{-jk}\,C_{\alpha_1,\alpha_3}\,
\big(2^{jm}2^{-j|\alpha_2|/2}\big)
\lesssim
2^{jm}\,2^{-jk}\,2^{-j|\alpha_2|/2}.
\]
Since $|\alpha_2|\le|\alpha|=2(k+\ell)\le2N$, the exponents satisfy
\[
2^{-jk}\,2^{-j|\alpha_2|/2} \leq 1.
\]
Thus each term in the expansion of the action of the operator $(L_j^v)^N$ into the symbol $\sigma_{\tau j}^v(\eta,y)$ is bounded by
\[
C_N\,2^{jm}.
\]
There are finitely many terms in the operator expansion, depending only on $N$.
Summing them yields
\[
|(L_j^v)^N \sigma_{\tau j}^v(\eta,y)|
\le C_N\,2^{jm}.
\] The proof is complete.
\end{proof}
\begin{remark}
Let
\[
A_j := 1 - \langle 2^{-j/2}\nabla_{\eta'},\,2^{-j/2}\nabla_{\eta'}\rangle,
\qquad
B := 1 - \langle\nabla_{\eta''},\,\nabla_{\eta''}\rangle .
\]
For a test function $u=u(\eta',\eta'')\in C^\infty$, we compute
\begin{align*}
A_j B u
&=
\Big(1 - \langle 2^{-j/2}\nabla_{\eta'},\,2^{-j/2}\nabla_{\eta'}\rangle\Big)
\Big(1 - \langle\nabla_{\eta''},\,\nabla_{\eta''}\rangle\Big)u \\
&=
u
- \langle\nabla_{\eta''},\,\nabla_{\eta''}\rangle u
- \langle 2^{-j/2}\nabla_{\eta'},\,2^{-j/2}\nabla_{\eta'}\rangle u \\
&\quad
+ \langle 2^{-j/2}\nabla_{\eta'},\,2^{-j/2}\nabla_{\eta'}\rangle
\langle\nabla_{\eta''},\,\nabla_{\eta''}\rangle u .
\end{align*}
Similarly,
\begin{align*}
B A_j u
&=
\Big(1 - \langle\nabla_{\eta''},\,\nabla_{\eta''}\rangle\Big)
\Big(1 - \langle 2^{-j/2}\nabla_{\eta'},\,2^{-j/2}\nabla_{\eta'}\rangle\Big)u \\
&=
u
- \langle\nabla_{\eta''},\,\nabla_{\eta''}\rangle u
- \langle 2^{-j/2}\nabla_{\eta'},\,2^{-j/2}\nabla_{\eta'}\rangle u \\
&\quad
+ \langle\nabla_{\eta''},\,\nabla_{\eta''}\rangle
\langle 2^{-j/2}\nabla_{\eta'},\,2^{-j/2}\nabla_{\eta'}\rangle u .
\end{align*}
Since derivatives with respect to $\eta'$ commute with those with respect to
$\eta''$, the last terms in the above expressions coincide. Hence,
\[
A_j B u = B A_j u \quad \text{for all } u\in C^\infty,
\]
and therefore
\[
A_j B = B A_j.
\]
This commutation property allows the operators $A_j$ and $B$ to be freely
interchanged and justifies repeated applications of the Leibniz rule to obtain \[
(L_j^v)^N = (A_jB)^N
= \sum_{k,\ell} c_{k,\ell}^{(N)}
\Big(\langle 2^{-j/2}\nabla_{\eta'},\,2^{-j/2}\nabla_{\eta'}\rangle\Big)^k
\Big(\langle\nabla_{\eta''},\,\nabla_{\eta''}\rangle\Big)^\ell,
\]
with $k+\ell \le N$.
\end{remark}

This gives us
\begin{align*}
    \big|K_j^v(x,y)\big| &\lesssim   C_N 2^{jm} \times H_j^v(x,y) \int\limits_{|\eta|\lesssim 2^{-j/2}} \,d\eta,\,m=-\varkappa/2.    
\end{align*}
Therefore, we obtain the estimate
\begin{align*}
    &\big|K_j^v(x,y)\big|\\
    &\lesssim  C_N2^{jm}  \big( 1 + 2^{-j/2} | (\nabla_1 \Phi_\tau(\eta_j^v,y) - x)' | \big)^{-2N} 
\big( 1 + | (\nabla_1 \Phi_\tau(\eta_j^v,y) - x)'' | \big)^{-2N} \int\limits_{|\eta|\lesssim 2^{-j/2}}\,d\eta.
\end{align*}
Now, we are working with polynomial decay in the form given below 
\[
H^v_j(x,y) := \big( 1 + 2^{-j/2} | (\nabla_1 \Phi_\tau(\eta_j^v,y) - x)' | \big)^{-2N} 
\big( 1 + | (\nabla_1 \Phi_\tau(\eta_j^v,y) - x)'' | \big)^{-2N},
\]
where the coordinates are split according to the rank \(\varkappa\) of the non-degenerate part of the phase:
\begin{itemize}
    \item $(\cdot)'\in \mathbb{R}^\varkappa$ denote the \emph{non-degenerate directions}, 
    \item $(\cdot)''\in \mathbb{R}^{d-\varkappa}$ denote the \emph{degenerate directions},
    \item $\eta_j^v$ is the center of the frequency cone,
    \item $N>d/2$ is an integer to ensure convergence of integrals.
\end{itemize}

\begin{lemma}[Integral of the kernel]
Under the above assumptions, there exists a constant \(C>0\), independent of \(j\) and \(y\), such that
\[
\int_{\mathbb{R}^d} H^v_j(x,y) \, dx \le C \, 2^{j \varkappa/2}.
\]
\end{lemma}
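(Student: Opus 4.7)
The plan is to reduce the integral to a product of two polynomial decay integrals, one in the non-degenerate coordinates $(\cdot)' \in \mathbb{R}^{\varkappa}$ and one in the degenerate coordinates $(\cdot)'' \in \mathbb{R}^{d-\varkappa}$, and then handle them separately by a change of variables.

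First I would translate in $x$ by setting $z := x - \nabla_1\Phi_\tau(\eta_j^v, y)$, which has unit Jacobian and maps $\mathbb{R}^d$ onto itself (with $y$ held fixed). Splitting $z = (z', z'') \in \mathbb{R}^{\varkappa} \times \mathbb{R}^{d-\varkappa}$ according to the non-degenerate/degenerate decomposition, the integrand becomes
\[
(1 + 2^{-j/2}|z'|)^{-2N}\,(1 + |z''|)^{-2N},
\]
which is a tensor product in $(z', z'')$. Hence by Fubini,
\[
\int_{\mathbb{R}^d} H^v_j(x,y)\,dx = \Bigl(\int_{\mathbb{R}^{\varkappa}} (1+2^{-j/2}|z'|)^{-2N}\,dz'\Bigr) \cdot \Bigl(\int_{\mathbb{R}^{d-\varkappa}} (1+|z''|)^{-2N}\,dz''\Bigr).
\]

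Second, for the $z'$ integral I would perform the rescaling $w' = 2^{-j/2} z'$, which yields $dz' = 2^{j\varkappa/2}\,dw'$, and therefore
\[
\int_{\mathbb{R}^{\varkappa}} (1+2^{-j/2}|z'|)^{-2N}\,dz' = 2^{j\varkappa/2} \int_{\mathbb{R}^{\varkappa}} (1+|w'|)^{-2N}\,dw'.
\]
For the $z''$ integral no rescaling is needed; it is an absolutely convergent standard polynomial integral on $\mathbb{R}^{d-\varkappa}$. Both integrals converge as soon as $2N > d$, which holds since we assumed $N > d/2$.

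Combining the two estimates gives
\[
\int_{\mathbb{R}^d} H^v_j(x,y)\,dx \lesssim 2^{j\varkappa/2},
\]
uniformly in $j$ and $y$, since the constants produced by the two model integrals depend only on $N$ and $d$. There is no real obstacle in this argument: the crucial point is simply that the scaling factor $2^{-j/2}$ appears only in the $\varkappa$ non-degenerate directions, so the rescaling contributes exactly the factor $2^{j\varkappa/2}$ and the degenerate directions contribute $O(1)$. This is the mechanism through which the rank $\varkappa$ enters the final kernel estimate.
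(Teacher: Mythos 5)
Your proof is correct and takes essentially the same route as the paper: the paper performs the translation and the anisotropic rescaling in a single change of variables, whereas you split them into a unit-Jacobian translation followed by a rescaling of the $\varkappa$ non-degenerate coordinates only; the Jacobian factor $2^{j\varkappa/2}$, the convergence condition $N>d/2$, and the final bound are identical.
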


\begin{proof}
We aim to compute the integral
\[
\int_{\mathbb{R}^d} H^v_j(x,y) \, dx
\]
and understand how the dyadic frequency scaling affects its size. 
  we apply change of variables $x\mapsto \nabla_1 \Phi_\tau(\eta_j^v,y) - x$ in both non-degenerate and degenerate directions and define the scaled variables as;
\[
u' := 2^{-j/2} (\nabla_1 \Phi_\tau(\eta_j^v,y) - x)' \in \mathbb{R}^\varkappa, 
\qquad
u'' := (\nabla_1 \Phi_\tau(\eta_j^v,y) - x)'' \in \mathbb{R}^{d-\varkappa}.
\]

The Jacobian of this transformation is
\[
dx = dx' \, dx'' = 2^{j \varkappa/2} \, du' \, du''.
\]
 Rewrite the integral in terms of $u', u''$, the integral becomes
\[
\begin{aligned}
\int_{\mathbb{R}^d} H^v_j(x,y) \, dx 
&= \int_{\mathbb{R}^\varkappa} \int_{\mathbb{R}^{d-\varkappa}} 
(1 + |u'|)^{-2N} (1 + |u''|)^{-2N} \, 2^{j \varkappa/2} \, du' \, du'' \\
&= 2^{j \varkappa/2} \int_{\mathbb{R}^{\varkappa}} (1 + |u'|)^{-2N} \, du' 
      \int_{\mathbb{R}^{d-\varkappa}} (1 + |u''|)^{-2N} \, du''.
\end{aligned}
\]
Since \(N > d/2\), both integrals
\[
\int_{\mathbb{R}^{\varkappa}} (1 + |u'|)^{-2N} \, du', 
\quad 
\int_{\mathbb{R}^{d-\varkappa}} (1 + |u''|)^{-2N} \, du''
\]
converge to finite constants $C_\varkappa$ and $C_{d-\varkappa}$, independent of \(j\) and \(y\).\\
Thus, we obtain
\[
\int_{\mathbb{R}^d} H^v_j(x,y) \, dx \le 2^{j \varkappa/2} \, C_\varkappa \, C_{d-\varkappa} =: C \, 2^{j \varkappa/2},
\]
where \(C = C_\varkappa \cdot C_{d-\varkappa}\) is independent of \(j\) and \(y\).\\
The factor \(2^{j \varkappa/2}\) arises from the scaling in the non-degenerate directions, while the polynomial decay ensures integrability in all directions. This completes the proof.
\end{proof}
\begin{proof}[Proof of Theorem \ref{Main:theorem}]
We now have,
\begin{align*}
    \int\big|K_j^v(x,y)\big|dx &\lesssim   C_N 2^{jm} \times 2^{\frac{j\varkappa}{2}} \int\limits_{\eta\in \mathbb{R}^\varkappa:|\eta|\lesssim 2^{-j/2}} \,d\eta,\,m=-\varkappa/2\\&
    \lesssim  C_N 2^{jm} \times 2^{\frac{j\varkappa}{2}} \times 2^{-\frac{j\varkappa}{2}}\\&
    \lesssim C_N 2^{jm}.    
\end{align*}
In view of the analysis above,
summing over the $v$ caps gives us the estimate
\begin{align*}
\sum\limits_{v=1}^{N(v)}\displaystyle{\int}\big|K_{j}^v(x,y)\big|\,dx\le C_N 2^{jm}2^{j\frac{\varkappa}{2}}\le C_N.
\end{align*}


\subsubsection*{Summation over the estimates}

We now explain how the dyadic estimates obtained above can be summed in order
to deduce boundedness of the full operator
\[
T = \sum_{j\ge1} T_j
\quad \text{on } \mathcal{F}L^1(\mathbb{R}^d).
\]

\paragraph{Step 1: A summation estimate in $\mathcal{F}L^1$.}
Let $\chi\in C_c^\infty(\mathbb{R}^d)$ be supported in the annulus
$\{\eta\in\mathbb{R}^d : B_0^{-1}\le |\eta|\le B_0\}$ for some $B_0>0$.
For $f\in\mathcal{S}(\mathbb{R}^d)$, we have
\[
\widehat{\chi(2^{-j}D)f}(\eta)
=
\chi(2^{-j}\eta)\widehat f(\eta).
\]
Hence, by definition of the Fourier--Lebesgue norm,
\begin{align*}
\sum_{j\ge1} \|\chi(2^{-j}D)f\|_{\mathcal{F}L^1}
&=
\sum_{j\ge1}
\int_{\mathbb{R}^d}
|\chi(2^{-j}\eta)|\,|\widehat f(\eta)|\,d\eta.
\end{align*}
Since the integrand is nonnegative, Tonelli's theorem allows us to interchange
the sum and the integral, yielding
\begin{align*}
\sum_{j\ge1} \|\chi(2^{-j}D)f\|_{\mathcal{F}L^1}
&=
\int_{\mathbb{R}^d}
\sum_{j\ge1}
|\chi(2^{-j}\eta)|\,|\widehat f(\eta)|
\,d\eta \\
&=
\int_{\mathbb{R}^d}
\left(
\sum_{j\ge1} |\chi(2^{-j}\eta)|
\right)
|\widehat f(\eta)|\,d\eta.
\end{align*}
By the compact support of $\chi$, for each fixed $\eta\in\mathbb{R}^d$ the sum
$\sum_{j\ge1} |\chi(2^{-j}\eta)|$ contains only finitely many nonzero terms and
is uniformly bounded. Therefore,
\[
\sum_{j\ge1} |\chi(2^{-j}\eta)| \le C,
\quad \text{for all } \eta\in\mathbb{R}^d.
\]
Consequently,
\[
\sum_{j\ge1} \|\chi(2^{-j}D)f\|_{\mathcal{F}L^1}
\le
C
\int_{\mathbb{R}^d} |\widehat f(\eta)|\,d\eta
\lesssim\|f\|_{\mathcal{F}L^1}.
\]

\medskip

\paragraph{Step 2: Estimate for a single dyadic operator.}
Due to the frequency localization of $T_j$, there exists $\chi\in C_c^\infty$
such that  $\chi(\eta)=1$ for $1/2\le |\eta|\le 2$ and $\chi(\eta)=0$ for $|\eta|\le 1/4$ and $|\eta|\ge 4$ (so that $\chi \psi=\psi$). 
Hence,
\[
T_j f = T_j(\chi(2^{-j}D)f).
\]
Using the $L^1$--boundedness of $T_j$, we obtain
\begin{align*}
\|T_j f\|_{\mathcal{F}L^1}
&=
\|T_j(\chi(2^{-j}D)f)\|_{\mathcal{F}L^1} \\
&=
\int_{\mathbb{R}^d}
\left|
\widehat{T_j(\chi(2^{-j}D)f)}(\xi)
\right|
\,d\xi \\
&\le
C
\int_{\mathbb{R}^d}
\left|
\widehat{\chi(2^{-j}D)f}(\xi)
\right|
\,d\xi \\
&\lesssim
\|\chi(2^{-j}D)f\|_{\mathcal{F}L^1}.
\end{align*}
\paragraph{Step 3: Summation over $j$.}
We now estimate the full operator $T=\sum_{j\ge1}T_j$.
By the triangle inequality in $\mathcal{F}L^1$, we have
\begin{align*}
\|Tf\|_{\mathcal{F}L^1}
&=
\left\|
\sum_{j\ge1} T_j f
\right\|_{\mathcal{F}L^1} \\
&=
\int_{\mathbb{R}^d}
\left|
\sum_{j\ge1} \widehat{T_j f}(\xi)
\right|
\,d\xi \\
&\le
\int_{\mathbb{R}^d}
\sum_{j\ge1}
\left|
\widehat{T_j f}(\xi)
\right|
\,d\xi \\
&=
\sum_{j\ge1}
\int_{\mathbb{R}^d}
\left|
\widehat{T_j f}(\xi)
\right|
\,d\xi \\
&=
\sum_{j\ge1}
\|T_j f\|_{\mathcal{F}L^1}.
\end{align*}
Using the estimate obtained in Step~2 and the summation estimate from Step~1,
we conclude that
\[
\|Tf\|_{\mathcal{F}L^1}
\le
C
\sum_{j\ge1}
\|\chi(2^{-j}D)f\|_{\mathcal{F}L^1}
\lesssim
\|f\|_{\mathcal{F}L^1}.
\]
Therefore, the operator $T$ extends to a bounded operator on
$\mathcal{F}L^1(\mathbb{R}^d)$.

\subsubsection*{The case $p=\infty$}

We begin with a simple but fundamental observation concerning frequency localized
functions.
\begin{remark}
Let $\{f_k\}_{k\ge0}\subset \mathcal{S}(\mathbb{R}^d)$ be a sequence such that
\[
\operatorname{supp}\widehat f_0\subset B_2(0),
\qquad
\operatorname{supp}\widehat f_k\subset
\{\eta\in\mathbb{R}^d:2^{k-1}\le|\eta|\le2^{k+1}\},\quad k\ge1.
\]
If the sequence $\{f_k\}$ is bounded in $\mathcal{F}L^\infty(\mathbb{R}^d)$, then
the series $\sum_{k=0}^\infty f_k$ converges in $\mathcal{F}L^\infty(\mathbb{R}^d)$ and
\[
\Big\|\sum_{k=0}^\infty f_k\Big\|_{\mathcal{F}L^\infty}
\lesssim \sup_{k\ge0}\|f_k\|_{\mathcal{F}L^\infty}.
\]
Indeed, at each frequency $\eta\in\mathbb{R}^d$ there are at most two indices $k$
such that $\widehat f_k(\eta)\neq0$.
\end{remark}
Using a dyadic partition of unity $\{\psi_k\}_{k\ge0}$ in the frequency variable, we write
\[
Tf=\sum_{k\ge0}\psi_k(D)Tf,
\]
and therefore
\begin{equation}\label{inf}
\|Tf\|_{\mathcal{F}L^\infty}
\lesssim
\sup_{k\ge0}\|\psi_k(D)Tf\|_{\mathcal{F}L^\infty}
=
\sup_{k\ge0}\Big\|\sum_{j\ge1}\psi_k(D)T_jf\Big\|_{\mathcal{F}L^\infty}.
\end{equation}
Next, we analyze the operators $\psi_k(D)T_j$.
The symbol $\sigma_j(x,\eta)$ of $T_j$ belongs to a bounded subset of
$S^{-\varkappa/2}_{1,0}$, while $\psi_k(\eta)$ belongs to a bounded subset of $S^0_{1,0}$.
By composition of FIOs with pseudodifferential operators,
for each $j,k$ we may decompose
\[
\psi_k(D)T_j = S_{k,j}+R_{k,j},
\]
where:
\begin{itemize}
\item $S_{k,j}$ is a Fourier integral operator with the same phase $\Phi$ and symbol
$\sigma_{k,j}\in S^{-d/2}_{1,0}$, uniformly bounded with respect to $j,k$;
\item $R_{k,j}$ is a smoothing operator whose symbol $r_{k,j}$ belongs to a bounded
subset of $\mathcal{S}(\mathbb{R}^{2d})$.
\end{itemize}
The support properties of $\sigma_{k,j}$ follow from the symbolic calculus.
If $k=0$, then $\sigma_{k,j}$ is supported where
\[\{ (x,\eta)\in \Omega^{\prime}\times \Gamma :
|\nabla_x\Phi_\tau(x,\eta)|\le2,~
2^{j-1}\le|\eta|\le2^{j+1}\}
\]
while for $k\ge1$ it is supported where
\[ \{(x,\eta)\in \Omega^{\prime}\times \Gamma :
2^{k-1}\le|\nabla_x\Phi_\tau(x,\eta)|\le2^{k+1},
2^{j-1}\le|\eta|\le2^{j+1}\}.
\]
By Euler’s identity and the nondegeneracy condition, we have the equivalence
\[
|\nabla_x\Phi_\tau(x,\eta)|
=|\partial^2_{x,\eta}\Phi_\tau(x,\eta)\,\eta|
\asymp|\eta|,
\qquad \forall(x,\eta)\in\Omega \times\Gamma.
\]
Consequently, there exists $N_0>0$ such that
\[
\sigma_{k,j}\equiv0
\qquad\text{whenever } |j-k|>N_0.
\]
On the other hand, the smoothing remainders satisfy
\[
r_k(x,\eta):=\sum_{j=1}^\infty r_{k,j}(x,\eta)\in\mathcal{S}(\mathbb{R}^{2d}),
\]
with $\{r_k\}_{k\ge0}$ bounded in $\mathcal{S}$.
Hence we may write
\[
\sum_{j=1}^\infty\psi_k(D)T_j
=
\sum_{j\ge 1:|j-k|\le N_0} S_{k,j}+r_k(x,D).
\]
Since the number of indices $j$ with $|j-k|\le N_0$ is finite and independent of $k$, the boundedness Theorem \ref{Main:theorem}
for $p=\infty$, together with the uniform boundedness of
$\psi_k(D)$, $R_{k,j}$, and $r_k(x,D)$ on $\mathcal{F}L^\infty$, yields
\[
\Big\|\sum_{j\ge1}\psi_k(D)T_jf\Big\|_{\mathcal{F}L^\infty}
\lesssim
\|f\|_{\mathcal{F}L^\infty},
\qquad \text{uniformly in } k.
\]
Inserting this bound into \eqref{inf}, we conclude that
\[
\|Tf\|_{\mathcal{F}L^\infty}
\lesssim
\|f\|_{\mathcal{F}L^\infty},
\]
which proves the boundedness of $T$ on $\mathcal{F}L^\infty(\mathbb{R}^d)$.

We have already proved Theorem \ref{Main:theorem} in the endpoint cases $p=1$ and $p=\infty$ for Fourier integral operators of order $m=-\varkappa/2$.  
The boundedness result for intermediate exponents $1<p<\infty$ and operators of
order
\[
m=-\varkappa\Big|\frac12-\frac1p\Big|,
\]
will then follow by complex interpolation together with the classical $L^2$
theory of Fourier integral operators.
For $s\in\mathbb{R}$ and $1\le p\le\infty$, we denote by $\mathcal{F}L^p_s$ the
weighted Fourier–Lebesgue space consisting of all tempered distributions $f$
such that
\[
\|f\|_{\mathcal{F}L^p_s}
:=\|\langle\eta\rangle^{s}\widehat f(\eta)\|_{L^p(\mathbb{R}^d)}=\bigg(\int\langle\eta\rangle^{ps}|\hat{f}|^p\,d\eta\bigg)^{1/p}<\infty,
\]
with the usual modification when $p=\infty$.
It is well known that the operator $\langle D\rangle^s$ defines an isomorphism
\[
\langle D\rangle^s:\mathcal{F}L^p_s\longrightarrow \mathcal{F}L^p.
\]
Consequently, an operator $T$ is bounded
\[
T:\mathcal{F}L^p_s\longrightarrow \mathcal{F}L^p
\quad\text{if and only if}\quad
T\langle D\rangle^{-s}:\mathcal{F}L^p\longrightarrow \mathcal{F}L^p
\ \text{is bounded}.
\]
Observe that if $T$ is a Fourier integral operator with phase $\Phi$ and symbol
$\sigma(x,\eta)$ of order $m$, then the composition $T\langle D\rangle^{-s}$ is
again a Fourier integral operator with the \emph{same phase} $\Phi$ and symbol
$\sigma(x,\eta)\langle\eta\rangle^{-s}$, which has order $m-s$.
Let $1<p<2$ and consider a Fourier integral operator $T$ of order
\[
m=-\varkappa\Big(\frac1p-\frac12\Big).
\]
Assume that the boundedness Theorem \ref{Main:theorem} is known for $p=1$ and $p=2$.
As before, for $s\in\mathbb{R}$ we denote by $\mathcal{F}L^p_s$ the weighted
Fourier–Lebesgue space with norm
\[
\|f\|_{\mathcal{F}L^p_s}
=\|\langle\eta\rangle^s\widehat f(\eta)\|_{L^p}.
\]
The operator $\langle D\rangle^s$ defines an isomorphism
$\mathcal{F}L^p_s \to \mathcal{F}L^p$.
By the endpoint estimates, the operator $T$ satisfies
\[
T:\mathcal{F}L^1_{m+\varkappa/2}\longrightarrow \mathcal{F}L^1
\quad\text{and}\quad
T:L^2_m\longrightarrow L^2.
\]
Let $\theta\in(0,1)$ be chosen such that
\[
\frac{1-\theta}{1}+\frac{\theta}{2}=\frac1p.
\]
Solving for $\theta$, we obtain
\[
\frac{1-\theta}{1}+\frac{\theta}{2}
=1-\frac{\theta}{2}
=\frac1p
\quad\Longrightarrow\quad
\theta=2\Big(1-\frac1p\Big).
\]
We now compute the interpolated Sobolev index.
Interpolating between $\mathcal{F}L^1_{m+\varkappa/2}$ and $L^2_m$ yields the exponent
$ 
(1-\theta)(m+\varkappa/2)+\theta m.
$ 
Substituting the value of $m$ and $\theta$, we compute
\begin{align*}
(1-\theta)(m+\varkappa/2)+\theta m
&= m + (1-\theta)\frac \varkappa2 = -\varkappa\Big(\frac1p-\frac12\Big)
+ \frac \varkappa2\Big(1-2\Big(1-\frac1p\Big)\Big) \\
&= -\frac \varkappa p+\frac \varkappa2
+ \frac \varkappa2\Big(\frac{2}{p}-1\Big) \\
&= -\frac \varkappa p+\frac \varkappa2+\frac \varkappa{p}-\frac \varkappa2 = 0.
\end{align*}
Therefore, the complex interpolation argument yields
\[
T:\mathcal{F}L^p\longrightarrow \mathcal{F}L^p,
\]
which proves Theorem \ref{Main:theorem} for all $1<p<2$.
The case $2<p<\infty$ follows by the same argument, interpolating between
$p=2$ and $p=\infty$. 
Finally, if the order $m$ satisfies a strict inequality \[ m \le -\varkappa \bigg|\dfrac{1}{2}-\dfrac{1}{p}\bigg|,\]
the
result follows from the equality case, since any operator of order $m'<m$ is
also of order $m$.
\end{proof}
\noindent\textbf{Conflict of interests statement - Data Availability Statements.}  The authors state that there is no conflict of interest.  Data sharing does not apply to this article as no datasets were generated or
analysed during the current study. \\

\noindent {\bf Acknowledgement.}   Duv\'an Cardona has been  supported  by the FWO  Odysseus  1  grant  G.0H94.18N:  Analysis  and  Partial Differential Equations and by the Methusalem programme of the Ghent University Special Research Fund (BOF)
(Grant number 01M01021), by the FWO Fellowship
Grant No 1204824N and by the FWO Grant K183725N of the Belgian Research Foundation FWO. Frederick Opoku has been supported by Kwame Nkrumah University of Science and Technology, (KNUST) of Ghana.

\bibliographystyle{amsplain}

\end{document}